\theoremstyle{definition}
\newcounter{thm}
\newtheorem{theorem}[thm]{Theorem}
\newtheorem{proposition}[thm]{Proposition}
\newtheorem{lemma}[thm]{Lemma}
\newtheorem{corollary}[thm]{Corollary}
\newtheorem{definition}[thm]{Definition}
\newtheorem{rmk}[thm]{Remark}
\newcommand{\al}{\alpha}
\newcommand{\be}{\beta}
\newcommand{\ga}{\gamma}
\newcommand{\de}{\delta}
\newcommand{\la}{\lambda}
\newcommand{\om}{\omega}
\DeclareMathOperator{\uc}{excess}
\newcommand{\ppi}{\pi}
\newcommand{\psig}{\sigma}
\newcommand{\nsig}{\#\mS}
\newcommand{\dd}{d}
\newcommand{\xx}{\textbf{x}}
\newcommand{\mA}{\mathcal{A}}\newcommand{\hmA}{\hat{\mA}}
\newcommand{\mB}{\mathcal{B}}
\newcommand{\mC}{\mathcal{C}}
\newcommand{\mM}{\mathcal{M}}
\newcommand{\mS}{\mathcal{S}}\newcommand{\hmS}{\hat{\mS}}
\newcommand{\mT}{\mathcal{T}}
\newcommand{\mTR}{\widetilde{\mathcal{T}}}
\newcommand{\GG}{G}
\newcommand{\PP}{\mathbb{P}}
\newcommand{\EE}{\mathbb{E}}
\newcommand{\fig}[3]{\begin{figure}[h]\begin{center}\includegraphics[#1]{#2}\end{center}\caption{#3}\label{fig:#2}\end{figure}}
\title{Separation probabilities for products of permutations}
\author{Olivier Bernardi\thanks{O.B. aknowledges support from NSF grant DMS-1068626, ANR A3, and ERC Explore-Maps.}, Rosena R. X. Du, Alejandro H. Morales and Richard P. Stanley}
\date{\today}
\begin{document}
\setcounter{tocdepth}{2}

\maketitle

\begin{abstract}
We study the mixing properties of permutations obtained as a product of two uniformly random permutations of fixed cycle types. For instance, we give an exact formula for the probability that elements $1,2,\ldots,k$ are in distinct cycles of the random permutation of $\{1,2,\ldots,n\}$ obtained as product of two uniformly random $n$-cycles.
\end{abstract}

\section{Introduction}\label{sec:intro}
We study certain \emph{separation probabilities} for products of permutations. The archetypal question can be stated as follows: \emph{in the symmetric group $\mathfrak{S}_n$, what is the probability that the elements $1,2,\ldots,k$ are in distinct cycles of the product of two $n$-cycles chosen uniformly randomly?} The answer is surprisingly elegant: the probability is $\frac{1}{k!}$ if $n-k$ is odd and $\frac{1}{k!} + \frac{2}{(k-2)!(n-k+1)(n+k)}$ if $n-k$ is even. This result was originally conjectured by B\'ona \cite{MBRF} for $k=2$ and $n$ odd. Subsequently, Du and Stanley proved it for all $k$ and proposed additional conjectures \cite{DS}. The goal of this paper is to prove these conjectures, and establish generalizations of the above result. Our approach is different from the one used in \cite{DS}.\\

Let us define a larger class of problems. 
Given a tuple $A=(A_1,\ldots,A_k)$ of $k$ disjoint non-empty subsets of $\{1,\ldots,n\}$, we say that a permutation $\pi$ is \emph{$A$-separated} if no cycle of $\pi$ contains elements of more than one of the subsets $A_i$. 
Now, given two integer partitions $\la,\mu$ of $n$, one can wonder about the probability $P_{\la,\mu}(A)$ that the product of two uniformly random permutations of cycle type $\la$ and $\mu$ is $A$-separated. The example presented above corresponds to $A=(\{1\},\ldots,\{k\})$ and $\la=\mu=(n)$. Clearly, the separation probabilities $P_{\la,\mu}(A)$ only depend on $A$ through the size of the subsets $\#A_1,\ldots,\#A_k$, and we shall denote $\psig_{\la,\mu}^{\al}:=P_{\la,\mu}(A)$, where $\alpha=(\#A_1,\ldots,\#A_k)$ is a composition (of size $m\leq n$). Note also that $\psig_{\la,\mu}^{\al}=\psig_{\la,\mu}^{\al'}$ whenever the composition $\al'$ is a permutation of the composition $\al$. Below, we focus on the case $\mu=(n)$ and we further denote $\psig_{\la}^{\al}:=\psig_{\la,(n)}^{\al}$.\\

In this paper, we first express the separation probabilities $\psig_{\la}^{\al}$ as some coefficients in an explicit generating function. Using this expression we then prove the following symmetry property: if $\alpha=(\al_1,\ldots,\al_k)$ and $\beta=(\be_1,\ldots,\be_k)$ are compositions of the same size $m\leq n$ and of the same length $k$, then
\begin{equation} \label{eq:probsep}
\frac{\psig_{\lambda}^\al}{\prod_{i=1}^k \alpha_i!}=\frac{\psig_{\lambda}^{\beta}}{\prod_{i=1}^k \beta_i!}.
\end{equation}
Moreover, for certain partitions $\la$ (including  the cases $\la=(n)$ and $\la=2^N$) we obtain explicit expressions for the  probabilities $\psig_{\la}^{\al}$ for certain partitions $\la$. For instance, the separation probability $\psig_{(n)}^\al$ for the product of two $n$-cycles is found to be
\begin{equation} \label{eq:probsigma}
\psig_{(n)}^\al = \frac{(n-m)!\prod_{i=1}^k \alpha_i!}{(n+k)(n-1)!}\left(\frac{(-1)^{n-m}\binom{n-1}{k-2}}{\binom{n+m}{m-k}}+ \sum_{r=0}^{m-k}\frac{(-1)^r\binom{m-k}{r} \binom{n+r+1}{m}}{\binom{n+k+r}{r}}\right).
\end{equation}
This includes the case $\al=1^k$ proved by Du and Stanley \cite{DS}.\\

Our general expression for the separation probabilities $\psig_{\la}^{\al}$ is derived using a formula obtained in \cite{MV} about \emph{colored} factorizations of the $n$-cycle into two permutations. This formula displays a symmetry which turns out to be of crucial importance for our method. Our approach can in fact be made mostly bijective as explained in Section~\ref{sec:maps}. Indeed, the formula obtained in \cite{MV} builds on a bijection established in \cite{SV}. An alternative bijective proof was given in \cite{BM1} and in Section~\ref{sec:maps} we explain how to concatenate this bijective proof with the constructions of the present paper.


\medskip

\noindent {\bf Outline.} 
In Section~\ref{sec:strategy} we present our strategy for computing the separation probabilities. This involves counting certain colored factorizations of the $n$-cycle. We then gather our main results in Section~\ref{sec:results}. In particular we prove the symmetry property \eqref{eq:probsep} and obtain formulas for the separation probabilities $\psig_{\la}^{\al}$ for certain partitions $\la$ including $\la=(n)$ or when $\la=2^N$. 
In Section~\ref{sec:fixedpoints}, we give formulas relating the separation probabilities $\psig_{\la}^{\al}$ and $\psig_{\la'}^{\al}$ when $\la'$ is a partition obtained from another partition $\la$ by adding some parts of size 1. In Section~\ref{sec:maps}, we indicate how our proofs could be made bijective. We gather a few additional remarks in Section~\ref{sec:conclusion}. \\


\noindent {\bf Notation.} 
We denote $[n]:=\{1,2,\ldots,n\}$. We denote by $\#S$ the cardinality of a set $S$.

A \emph{composition} of an integer $n$ is a tuple $\alpha=(\alpha_1,\alpha_2,\ldots,\alpha_k)$ of positive integer summing to $n$. We then say that $\al$ has \emph{size} $n$ and \emph{length} $\ell(\al)=k$. An \emph{integer partition} is a composition such that the \emph{parts} $\al_i$ are in weakly decreasing order. We use the notation $\lambda \models n$ (resp. $\lambda \vdash n$) to indicate that $\la$ is a composition (resp. integer partition) of $n$. We sometime write integer partitions in multiset notation: writing $\la=1^{n_1},2^{n_2},\ldots,j^{n_j}$ means that $\la$ has $n_i$ parts equal to $i$.

We denote by $\mathfrak{S}_n$ the symmetric group on $[n]$. Given a partition $\lambda$ of $n$, we denote by $\mathcal{C}_{\lambda}$ the set of permutations in $\mathfrak{S}_n$ with cycle type $\lambda$. It is well known that $\#\mathcal{C}_{\lambda}=n!/z_{\lambda}$ where $z_{\lambda}=\prod_i i^{n_i(\lambda)} n_i(\lambda)!$ and $n_i(\lambda)$ is the number of parts equal to $i$ in $\lambda$.

We shall consider symmetric functions in an infinite number of variables $\xx=\{x_1,x_2,\ldots\}$. For any sequence of nonnegative integers, $\al=(\al_1,\al_2,\ldots,\al_k)$ we denote $\xx^{\al}:=x_1^{\al_1}x_2^{\al_2}\ldots x_k^{\al_k}$. We denote by $[\xx^{\al}]f(\xx)$ the coefficient of this monomial in a series $f(\xx)$.
For an integer partition $\la=(\la_1,\ldots,\la_k)$ we denote by $p_{\lambda}(\xx)$ and $m_{\lambda}(\xx)$ respectively the \emph{power symmetric function} and \emph{monomial symmetric function} indexed by $\lambda$ (see e.g. \cite{EC2}). That is, $p_{\lambda}(\xx)=\prod_{i=1}^{\ell(\lambda)} p_{\lambda_i}(\xx)$ where $p_k(\xx)=\sum_{i\geq 1} x_i^k$, and $m_{\lambda}(\xx)=\sum_{\alpha} \xx^\al$ where the sum is over all the distinct sequences $\al$ whose positive parts are $\{\la_1,\la_2,\ldots,\la_k\}$ (in any order). Recall that the power symmetric functions form a basis of the ring of symmetric functions. For a symmetric function $f(\xx)$ we denote by $[p_{\lambda}(\xx)]f(\xx)$ the coefficient of $p_{\lambda}(\xx)$ of the decomposition of $f(\xx)$ in this basis.


\section{Strategy} \label{sec:strategy}
In this section, we first translate the problem of determining the separation probabilities $\psig_\la^\al$ into the problem of enumerating certain sets $\mS_\la^\al$. Then, we introduce a symmetric function $\GG^\al_n(\xx,t)$ whose coefficients in one basis are the cardinalities $\#\mS_\la^\al$, while the coefficients in another basis count certain ``colored'' separated factorizations of the permutation $(1,\ldots,n)$. Lastly, we give exact counting formulas for these colored separated factorizations. Our main results will follow as corollaries in Section~\ref{sec:results}.\\

For a composition $\al=(\al_1,\ldots,\al_k)$ of size $m\leq n$, we denote by $\mA_n^\al$ the set of tuples $A=(A_1,\ldots,A_k)$ of pairwise disjoint subsets of $[n]$ with $\#A_i=\al_i$ for all $i$ in $[k]$. Observe that $\#\mA_n^\al=\binom{n}{\alpha_1,\alpha_2,\ldots,\alpha_k,n-m}$.\\
 
Now, recall from the introduction that $\psig_{\la}^{\al}$ 
is the probability for the product of a uniformly random permutation of cycle type $\la$ with a \emph{uniformly random} $n$-cycle to be $A$-separated for a \emph{fixed} tuple $A$ in $\mA_n^\al$. Alternatively, it can be defined as the probability for the product of a uniformly random permutation of cycle type $\la$ with a \emph{fixed} $n$-cycle to be $A$-separated for a \emph{uniformly random} tuple $A$ in $\mA_n^\al$ (since the only property that matters is that the elements in $A$ are randomly distributed in the $n$-cycle). 
\begin{definition}
For an integer partition $\la$ of $n$, and a composition $\al$ of $m\leq n$, we denote by $\mS_\la^\al$ the set of pairs $(\pi,A)$, where  $\pi$ is a permutation in $\mathcal{C}_\la$ and $A$ is a tuple in $\mA_n^\al$ such that the product $\pi\circ (1,2,...,n)$ is $A$-separated.
\end{definition}\label{def:setS}
From the above discussion we obtain for any composition $\al=(\al_1,\ldots,\al_k)$ of size $m$,
\begin{equation}\label{eq:probtoenumsig}
\psig_{\lambda}^\al = \frac{\#\mS_\la^\al}{\binom{n}{\alpha_1,\alpha_2,\ldots,\alpha_k,n-m}\#\mathcal{C}_{\lambda}}. 
\end{equation}\\ 

Enumerating the sets $\mS_\la^\al$ directly seems rather challenging. However, we will show below how to enumerate a related class of ``colored'' separated permutations denoted by $\mT^\al_{\ga}(r)$. We define a \emph{cycle coloring} of a permutation $\pi\in\mathfrak{S}_n$ in $[q]$ to be a mapping $c$ from $[n]$ to $[q]$ such that if $i,j\in[n]$ belong to the same cycle of $\pi$ then $c(i)=c(j)$. We think of $[q]$ as the set of \emph{colors}, and $c^{-1}(i)$ as set of \emph{elements colored $i$}. 
\begin{definition}\label{def:setT}
Let $\ga=(\ga_1,\ldots,\ga_\ell)$ be a composition of size $n$ and length $\ell$, and let $\alpha=(\al_1,\ldots,\al_k)$ be a composition of size $m\leq n$ and length $k$. For a nonnegative integer $r$ we define $\mT^\al_{\ga}(r)$ as the set of quadruples $(\pi,A,c_1,c_2)$, where $\pi$ is a permutation of $[n]$, $A=(A_1,\ldots,A_k)$ is in $\mA^\al_n$, and 
\begin{compactitem}
\item[(i)] $c_1$ is a cycle coloring of $\pi$ in $[\ell]$ such that there are $\ga_i$ element colored $i$ for all $i$ in $[\ell]$,
\item[(ii)] $c_2$ is a cycle coloring of the product $\pi\circ(1,2,\ldots,n)$ in $[k+r]$ such that every color in $[k+r]$ is used and for all $i$ in $[k]$ the elements in the subset $A_i$ are colored $i$.
\end{compactitem}
\end{definition}
Note that condition (ii) in Definition~\ref{def:setT} and the definition of cycle coloring implies that the product $\pi\circ(1,2,\ldots,n)$ is $A$-separated.\\

In order to relate the cardinalities of the sets $\mS^\al_{\lambda}$ and $\mT^\al_{\ga}(r)$, it is convenient to use symmetric functions (in the variables $\xx=\{x_1,x_2,x_3,\ldots\}$). 
Namely, given a composition $\al$ of $m\leq n$, we define
\[
\GG^\al_n(\xx,t) := \sum_{\lambda \vdash n} p_{\lambda}(\xx) \sum_{ (\pi,A) \in \mS_\la^\al} t^{\uc(\pi,A)},
\]
where the outer sum runs over all the integer partitions of $n$, and $\uc(\pi,A)$ is the number of cycles of the product $\pi\circ(1,2,\ldots,n)$ containing none of the elements in $A$. Recall that the power symmetric functions $p_{\lambda}(\xx)$ form a basis of the ring of symmetric functions, so that the contribution of a partition $\la$ to $\GG^\al_n(\xx,t)$ can be recovered by extracting the coefficient of $p_{\lambda}(\xx)$ in this basis:
\begin{equation} \label{genseriestosig}
\nsig^\al_{\lambda} = [p_{\lambda}(\xx)] \,\, \GG^\al_n(\xx,1).
\end{equation}
As we prove now, the sets $\mT^\al_{\ga}(r)$ are related to the coefficients of $\GG^\al_n(\xx,t)$ in the basis of monomial symmetric functions.
\begin{proposition}\label{prop:generating-function}
If $\al$ is a composition of length $k$, then
\begin{equation} \label{eq:colsepmap}
\GG_n^\al(\xx,t+k)=\sum_{\ga \vdash n} m_{\ga}(\xx)  \sum_{r\geq 0}\binom{t}{r} \,\#\mT^\al_{\gamma}(r),
\end{equation}
where the outer sum is over all integer partitions of $n$, and $\displaystyle \binom{t}{r}:=\frac{t(t-1)\cdots (t-r+1)}{r!}$.
\end{proposition}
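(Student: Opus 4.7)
The plan is to interpret both sides as generating functions for tuples $(\pi,A,c_1,c_2)$ and to match them via a polynomial identity in the indeterminate $t$. Starting from the left, for a permutation $\pi \in \mathcal{C}_\la$, I would interpret $p_\la(\xx) = \prod_j p_{\la_j}(\xx)$ as the generating function of cycle colorings of $\pi$ in the colors $\{1,2,\ldots\}$, namely $p_\la(\xx) = \sum_{c_1} \xx^{\mathrm{dist}(c_1)}$, where $\mathrm{dist}(c_1)_i$ is the number of elements colored $i$. Grouping these colorings by the partition $\ga$ underlying $\mathrm{dist}(c_1)$ and using the symmetry of $p_\la$ gives $p_\la(\xx) = \sum_{\ga \vdash n} A_{\la,\ga}\, m_\ga(\xx)$, where $A_{\la,\ga}$ denotes the number of cycle colorings of any fixed $\pi \in \mathcal{C}_\la$ in the colors $[\ell(\ga)]$ having exactly $\ga_i$ elements of color $i$, as in Definition~\ref{def:setT}(i).

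Next, for fixed $(\pi,A) \in \mS_\la^\al$ with $F := \uc(\pi,A)$ free cycles in the product $\pi \circ (1,\ldots,n)$, I would establish the polynomial identity
\[
(t+k)^F \;=\; \sum_{r \ge 0} \binom{t}{r}\, N(F,k,r),
\]
where $N(F,k,r)$ is the number of functions from $[F]$ into $[k+r]$ whose image contains $\{k+1,\ldots,k+r\}$. For nonnegative integers $t$, this follows by partitioning the $(k+t)^F$ functions $[F] \to [k+t]$ according to which $r$-subset of the ``new'' colors $\{k+1,\ldots,k+t\}$ is actually hit, which can be chosen in $\binom{t}{r}$ ways; both sides are polynomials in $t$, so the identity holds universally. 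The key observation is then that $N(\uc(\pi,A),k,r)$ is \emph{precisely} the number of cycle colorings $c_2$ of $\pi\circ(1,\ldots,n)$ satisfying Definition~\ref{def:setT}(ii): by $A$-separation every cycle hitting $A_i$ is monochromatically colored $i$, the ``old'' colors $[k]$ are then automatically used because each $A_i$ is nonempty, and the $F$ free cycles are distributed freely among $[k+r]$ subject to exhausting $\{k+1,\ldots,k+r\}$.

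Combining these three ingredients and interchanging summations gives
\[
\sum_\ga m_\ga(\xx)\sum_{r\ge 0}\binom{t}{r}\#\mT^\al_\ga(r) \;=\; \sum_\la p_\la(\xx)\!\!\sum_{(\pi,A)\in \mS_\la^\al}\!\!(t+k)^{\uc(\pi,A)} \;=\; \GG^\al_n(\xx,t+k),
\]
which is the desired identity. No step is genuinely difficult; the main obstacle is the bookkeeping --- keeping the two cycle colorings $c_1$ (on $\pi$) and $c_2$ (on $\pi\circ(1,\ldots,n)$) distinct, and noting that $A$-separation is exactly the hypothesis required to make the forced part of $c_2$ well-defined and to guarantee that the colors $[k]$ are automatically used.
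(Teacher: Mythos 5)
Your proof is correct and follows essentially the same route as the paper: interpret $p_\la(\xx)$ as the generating function of cycle colorings $c_1$, interpret $(t+k)^{\uc(\pi,A)}$ as counting the colorings $c_2$ of $\pi\circ(1,\ldots,n)$ in $[k+t]$ that are forced to color $A_i$ with $i$, and partition those colorings by the subset of the $t$ extra colors actually used to produce the $\binom{t}{r}\,\#\mT^\al_\ga(r)$ terms. The paper phrases this as an equality of coefficients of $\xx^\ga$ for nonnegative integer $t$ and then invokes polynomiality, whereas you isolate the polynomial identity $(t+k)^F=\sum_r\binom{t}{r}N(F,k,r)$ explicitly, but the content is identical.
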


\begin{proof} Since both sides of \eqref{eq:colsepmap} are polynomial in $t$ and symmetric function in $\xx$ it suffices to show that for any nonnegative integer $t$ and any partition $\ga$ the coefficient of $\xx^\ga$ is the same on both sides of \eqref{eq:colsepmap}.
We first determine the coefficient $\displaystyle [\xx^{\ga}]\GG_n^\al(\xx,t+k)$ when $t$ is a nonnegative integer.
Let $\la$ be a partition, and $\pi$ be a permutation of cycle type $\la$. Then the symmetric function $p_{\la}(\xx)$ can be interpreted as the generating function of the cycle colorings of $\pi$, that is, for any sequence $\ga=(\ga_1,\ldots,\ga_{\ell})$ of nonnegative integers, the coefficient $[\xx^{\ga}]p_{\la}(\xx)$ is the number of cycle colorings of $\pi$ such that $\ga_i$ elements are colored $i$, for all $i> 0$. Moreover, if $\pi$ is $A$-separated for a certain tuple $A=(A_1,\ldots,A_k)$ in $\mA^\al_n$, then $(t+k)^{\uc(S,\pi)}$ represents the number of cycle colorings of the permutation $\pi\circ(1,2,\ldots,n)$ in $[k+t]$ (not necessarily using every color) such that for all $i\in[k]$ the elements in the subset $A_i$ are colored $i$. Therefore, for a partition $\ga$ and a nonnegative integer $t$, the coefficient $[\xx^{\ga}]\GG_n^\al(\xx,t+k)$ counts the number of quadruples $(\pi,A,c_1,c_2)$, where $\pi,A,c_1,c_2$ are as in the definition of $\mT^\al_{\ga}(t)$ except that $c_2$ might actually use only a subset of the colors $[k+t]$. Note however that all the colors in $[k]$ will necessarily be used by $c_2$, and that we can partition the quadruples according to the subset of colors used by $c_2$. This gives 
$$[\xx^{\ga}]\GG_n^\al(\xx,t+k)=\sum_{ r\geq 0}\binom{t}{r} \,\#\mT^\al_{\gamma}(r).$$ 
Now extracting the coefficient of $\xx^{\ga}$ in the right-hand side of~\eqref{eq:colsepmap} gives the same result. This completes the proof.
\end{proof}

In order to obtain an explicit expression for the series $\GG^\al_n(\xx,t)$ it remains to enumerate the sets $\mT^\al_\ga(r)$ which is done below.

\begin{proposition}\label{prop:cardT}
Let $r$ be a nonnegative integer, let $\al$ be a composition of size $m$ and length $k$, and let $\ga$ be a partition of size $n\geq m$ and length $\ell$. Then the set $\mT^\al_\ga(r)$ specified by Definition~\ref{def:setT} has cardinality
\begin{equation}\label{eq:cardT}
\#\mT^\al_\ga(r)=\frac{n(n-\ell)!(n-k-r)!}{(n-k-\ell-r+1)!}\, \binom{n+k-1}{n-m-r},
\end{equation}
if $n-k-\ell-r+1\geq 0$, and 0 otherwise.
\end{proposition}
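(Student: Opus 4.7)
The plan is to interpret $\mT^\al_\ga(r)$ in terms of \emph{colored factorizations of the $n$-cycle} and then invoke the Morales--Vassilieva formula from \cite{MV} announced in the introduction. Given $(\pi,A,c_1,c_2)\in\mT^\al_\ga(r)$, set $\tau:=\pi^{-1}$ and $\sigma:=\pi\circ(1,2,\ldots,n)$, so that $\tau\circ\sigma=(1,2,\ldots,n)$ and $c_1$ becomes a cycle coloring of $\tau$ (which has the same cycles as $\pi$). Thus elements of $\mT^\al_\ga(r)$ correspond to factorizations of the $n$-cycle equipped with two cycle colorings of respective profiles $\ga$ and $\de=(\de_1,\ldots,\de_{k+r})$ (where $\de_j$ is the number of elements colored $j$ under $c_2$), together with subsets $A_i\subseteq c_2^{-1}(i)$ of size $\al_i$ for $i\in[k]$.

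I would then group the quadruples by the color-profile $\de$ of $c_2$, which must satisfy $\de_j\geq 1$ for all $j$ (all colors appear) and $\de_i\geq\al_i$ for $i\leq k$ (room for $A_i$). For fixed $\de$ the number of admissible $A$'s is $\prod_{i=1}^k \binom{\de_i}{\al_i}$, giving
\[
\#\mT^\al_\ga(r)\;=\;\sum_{\de} F(\ga,\de)\,\prod_{i=1}^k \binom{\de_i}{\al_i},
\]
where $F(\ga,\de)$ counts factorizations $(\tau,\sigma)$ of $(1,2,\ldots,n)$ endowed with cycle colorings of profiles $(\ga,\de)$. The Morales--Vassilieva formula gives $F(\ga,\de)$ in closed form, and in particular forces $F(\ga,\de)=0$ unless $\ell+(k+r)\leq n+1$---this is the genus-$0$ bound, which accounts exactly for the threshold $n-k-\ell-r+1\geq 0$ in the statement.

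The last step is to substitute the MV formula into the sum above and simplify. I expect the $\ga$-dependent factors and the factorial prefactor from \cite{MV} to reassemble into $\frac{n(n-\ell)!(n-k-r)!}{(n-k-\ell-r+1)!}$, while the product $\prod_{i=1}^k\binom{\de_i}{\al_i}$ combines with the $\de$-dependent part of $F(\ga,\de)$ and collapses under a Vandermonde-style convolution (summing over the $k$ constrained parts as well as the $r$ free ones) into the single binomial $\binom{n+k-1}{n-m-r}$. The main obstacle is precisely this collapse: one has to verify that the $\de$-dependence of the integrand telescopes into one binomial coefficient with no residual dependence on $\al$ beyond $(m,k)$. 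This is also the combinatorial source of the symmetry~\eqref{eq:probsep}, which is one of the central points of the paper, so getting it right is both the technical crux and the moral heart of the proof.
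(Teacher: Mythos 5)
Your proposal is correct and follows essentially the same route as the paper: decompose $\mT^\al_\ga(r)$ according to the color profile $\de$ of $c_2$, apply the formula of \cite{MV} for the number of bicolored factorizations of the $n$-cycle (which also supplies the vanishing threshold $n-k-\ell-r+1\geq 0$), and then sum $\prod_{i=1}^k\binom{\de_i}{\al_i}$ over $\de$. The only point worth clarifying is that the step you identify as the technical crux is simpler than you fear: the MV count depends on $\de$ only through its length $k+r$, so it factors out of the sum as a constant, and what remains is the elementary identity $\sum_{\de\models n,\ \ell(\de)=k+r}\prod_{i=1}^k\binom{\de_i}{\al_i}=\binom{n+k-1}{n-m-r}$ (the paper's Lemma~\ref{lem:nb-markings}, proved by a stars-and-bars style bijection) --- there is no residual convolution between the $\de$-dependence of the factorization count and the binomials.
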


The rest of this section is devoted to the proof of Proposition~\eqref{prop:cardT}. 
In order to count the quadruples $(\pi,A,c_1,c_2)$ satisfying Definition~\ref{def:setT}, we shall start by choosing $\pi,c_1,c_2$ before choosing the tuple $A$. For compositions $\ga=(\ga_1,\ldots,\ga_\ell)$, $\de=(\de_1,\ldots,\de_{\ell'})$ of $n$ we denote by $\mB_{\ga,\de}$ the set of triples $(\pi,c_1,c_2)$, where $\pi$ is a permutation of $[n]$, $c_1$ is a cycle coloring of $\pi$ such that $\ga_i$ elements are colored $i$ for all $i\in[\ell]$, and $c_2$ is a cycle coloring of the permutation $\pi\circ (1,2,\ldots,n)$ such that $\de_i$ elements are colored $i$ for all $i\in[\ell']$. The problem of counting such sets was first considered by Jackson \cite{J} who actually enumerated the union $\mathcal{B}^n_{i,j}:=\displaystyle \bigcup_{\ga,\de\models n,~\ell(\ga)=i,~\ell(\de)=j}\mB_{\ga,\de}$ using representation theory. It was later proved in \cite{MV} that 
\begin{equation}\label{eq:colored-factorizations}
\#\mB_{\ga,\de}=\frac{n(n-\ell)!(n-\ell')!}{(n-\ell-\ell'+1)!}, 
\end{equation}
if $n-\ell-\ell'+1\geq 0$, and 0 otherwise. 
The proof of~\eqref{eq:colored-factorizations} in \cite{MV} uses a refinement of a bijection designed in \cite{SV} in order to prove Jackson's formula for $\#\mathcal{B}^n_{i,j}$. 
 Another bijective proof of~\eqref{eq:colored-factorizations} is given in \cite{BM1}, and we shall discuss it further in Section~\ref{sec:maps} (a proof of~\eqref{eq:colored-factorizations} using representation theory can be found in \cite{EV}).\\ 

One of the striking features of the counting formula~\eqref{eq:colored-factorizations} is that it depends on the compositions $\ga$, $\de$ only through their lengths $\ell$, $\ell'$. This ``symmetry'' will prove particularly handy for enumerating $\mT^\al_\ga(r)$. Let $r$, $\al$, $\ga$ be as in Proposition~\ref{prop:cardT}, and let $\de=(\de_1,\ldots,\de_{k+r})$ be a composition of $n$ of length $k+r$. 
We denote by $\mT^\al_{\ga,\de}$ the set of quadruples $(\pi,A,c_1,c_2)$ in $\mT^\al_\ga(r)$ such that the cycle coloring $c_2$ has $\de_i$ elements colored $i$ for all $i$ in $[k+r]$ (equivalently, $(\pi,c_1,c_2)\in\mB_{\ga,\de}$). We also denote $\dd^\al_\de:=\prod_{i=1}^k{\de_i \choose \al_i}$. It is easily seen that for any triple $(\pi,c_1,c_2)\in\mB_{\ga,\de}$, the number $\dd^\al_\de$ counts the tuples $A\in\mA_n^\al$ such that $(\pi,A,c_1,c_2)\in\mT^\al_{\ga,\de}$. Therefore,
$$ \#\mT^\al_\ga(r)=\sum_{\de\models n,~\ell(\de)=k+r}\#\mT^\al_{\ga,\de}=\sum_{\de\models n,~\ell(\de)=k+r}\dd^\al_\de\, \#\mB_{\ga,\de},$$
where the sum is over all the compositions of $n$ of length $k+r$. Using~\eqref{eq:colored-factorizations} then gives
$$ \#\mT^\al_\ga(r)=\frac{n(n-\ell)!(n-k-r)!}{(n-k-\ell-r+1)!}\sum_{\de\models n,~\ell(\de)=k+r}\dd^\al_\de$$
if $n-k-\ell-r+1\geq 0$, and 0 otherwise. In order to complete the proof of Proposition~\ref{prop:cardT}, it only remains to prove the following lemma.

\begin{lemma} \label{lem:nb-markings}
If $\al$ has size $m$ and length $k$, then
$$\sum_{\de\models n,~\ell(\de)=k+r}\dd^\al_\de=\binom{n+k-1}{n-m-r}.$$
\end{lemma}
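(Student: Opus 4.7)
The plan is to prove Lemma~\ref{lem:nb-markings} by a direct generating function calculation in the variable $x$, viewing both sides as functions of $n$ with $\alpha$ and $r$ fixed. The key observation is that $\dd^\al_\de=\prod_{i=1}^k\binom{\de_i}{\al_i}$ depends only on the first $k$ parts of $\de$, while the last $r$ parts are unweighted positive integers. Moreover, since $\al_i\geq 1$ forces $\binom{0}{\al_i}=0$, each of the first $k$ parts may be summed over $d\geq 1$ without changing anything.

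First I would invoke the standard identities $\sum_{d\geq 1}\binom{d}{\al_i}x^d=\dfrac{x^{\al_i}}{(1-x)^{\al_i+1}}$ for $i\in[k]$, and $\sum_{d\geq 1}x^d=\dfrac{x}{1-x}$ for the $r$ unweighted parts. Because the condition $\de_1+\cdots+\de_{k+r}=n$ is encoded by extracting the coefficient of $x^n$ in a product, and $\sum_i\al_i=m$, one gets
\[
\sum_{n\geq 0}\Bigl(\sum_{\de\models n,\ \ell(\de)=k+r}\dd^\al_\de\Bigr)x^n
=\prod_{i=1}^{k}\frac{x^{\al_i}}{(1-x)^{\al_i+1}}\cdot\Bigl(\frac{x}{1-x}\Bigr)^{r}
=\frac{x^{m+r}}{(1-x)^{m+k+r}}.
\]
Extracting $[x^n]$, equivalently $[x^{n-m-r}](1-x)^{-(m+k+r)}$, and applying the negative binomial series yields $\binom{(n-m-r)+(m+k+r)-1}{n-m-r}=\binom{n+k-1}{n-m-r}$, which is the claimed identity.

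There is no real obstacle; the only subtle point is remembering that a composition has positive parts, so the $r$ unweighted factors contribute $x/(1-x)$ rather than $1/(1-x)$, which is precisely what produces the shift from $x^m$ to $x^{m+r}$ in the numerator. (A combinatorial proof is also possible, since the right-hand side counts weak compositions of $n-m-r$ into $m+k+r$ nonnegative parts by stars and bars, but the generating function argument is the most economical.)
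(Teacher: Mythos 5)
Your proof is correct. The generating function identity $\sum_{d\geq 0}\binom{d}{a}x^d=x^a/(1-x)^{a+1}$, the observation that the first $k$ parts may range over all positive integers because $\binom{0}{\al_i}=0$ when $\al_i\geq 1$, and the final extraction $[x^{n-m-r}](1-x)^{-(m+k+r)}=\binom{n+k-1}{n-m-r}$ are all sound, and you correctly account for the $r$ unweighted parts contributing $x/(1-x)$ rather than $1/(1-x)$.

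Your route differs from the paper's in presentation though not in substance. The paper gives an explicit bijection: it represents a composition $\de$ as rows of boxes with $\al_i$ marked boxes in row $i$, pads each row with an extra marked box (and marks the last box of each of the final $r$ rows), and concatenates to obtain a sequence of $n+k$ boxes carrying $m+k+r$ marks with the last box marked, counted by $\binom{n+k-1}{n-m-r}$. That bijection is essentially the combinatorial shadow of your product of generating functions — the padding by one marked box per row corresponds exactly to your shift from $1/(1-x)$ to $x/(1-x)$ and from $\binom{d}{\al_i}$ at $d\geq 0$ to $d\geq 1$. The paper's choice fits its broader program of making the arguments bijective (cf.\ its Section~\ref{sec:maps}), whereas your computation is shorter and arguably more routine to verify. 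Either proof is acceptable here.
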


\begin{proof}
We give a bijective proof illustrated in Figure~\ref{fig:bijmarking}. One can represent a composition $\de=(\de_1,\ldots,\de_{k+r})$ as a sequence of rows of boxes (the $i$th row has $\de_i$ boxes). With this representation, $\dd^\al_\de:=\prod_{i=1}^k{\de_i \choose \al_i}$ is the number of ways of choosing $\al_i$ boxes in the $i$th row of $\delta$ for $i=1,\ldots,k$. Hence $\sum_{\de\models n,~\ell(\de)=k+r}\dd^\al_\de$ counts \emph{$\al$-marked compositions}  of size $n$ and length $k+r$, that is, sequences of $k+r$ non-empty rows of boxes with some marked boxes in the first $k$ rows, with a total of $n$ boxes, and $\al_i$ marks in the $i$th row for $i=1,\ldots,k$; see Figure~\ref{fig:bijmarking}. 
Now $\al$-marked compositions of size $n$ and length $k+r$ are clearly in bijection (by adding a marked box to each of the rows $1,\ldots,k$, and marking the last box of each of the rows $k+1,\ldots,k+r$) with $\al'$-marked compositions of size $n+k$ and length $k+r$ \emph{such that the last box of each row is marked}, where $\al'=(\al_1+1,\al_2+1,\ldots,\al_k+1,1,1,\ldots,1)$ is a composition of length $k+r$. Lastly, these objects are clearly in bijection (by concatenating all the rows) with sequences of $n+k$ boxes with $m+k+r$ marks, one of which is on the last box. There are $\binom{n+k-1}{n-m-r}$ such sequences, which concludes the proof of Lemma~\ref{lem:nb-markings} and Proposition~\ref{prop:cardT}.
\end{proof}

\fig{width=\linewidth}{bijmarking}{A $(2,1,2)$-marked composition of size $n=12$ and length $5$ and its bijective transformation into a sequence $n+k=15$ boxes with $m+k+r=5+3+2=10$ marks, one of which is on the last box.}


\section{Main results} \label{sec:results}
In this section, we exploit Propositions~\ref{prop:generating-function} and~\ref{prop:cardT} in order to derive our main results. All the results in this section will be consequences of the following theorem.

\begin{theorem}\label{thm:GF}
For any composition $\al$ of $m\leq n$ of length $k$, the generating function $\GG_n^\al(\xx,t+k)$ in the variables $t$ and $\xx=\{x_1,x_2,\ldots\}$ has the following explicit expression in the bases $m_{\la}(\xx)$ and $\binom{t}{r}$:
\begin{equation}\label{eq:GF-explicit}
\GG_n^\al(\xx,t+k) = \sum_{r=0}^{n-m}\binom{t}{r}\binom{n+k-1}{n-m-r}\sum_{\la \vdash n,~ \ell(\la)\leq n-k-r+1}  \frac{n(n-\ell(\la))!(n-k-r)!}{(n-k-r-\ell(\la)+1)!} ~ m_{\la}(\xx).
\end{equation}
Moreover, for any partition $\la$ of $n$, one has $\displaystyle \nsig_\la^\al=[p_\la(\xx)]\GG_n^\al(\xx,1)$ and $\displaystyle \psig_{\la}^\al=\frac{\nsig_\la^\al}{{n \choose \al_1,\al_2,\ldots,\al_k,n-m}\#\mathcal{C}_\la}$. 
\end{theorem}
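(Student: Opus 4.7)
The plan is simply to assemble the theorem from the two technical propositions already established. Proposition~\ref{prop:generating-function} already writes
$$\GG_n^\al(\xx,t+k)=\sum_{\ga\vdash n} m_\ga(\xx)\sum_{r\geq 0}\binom{t}{r}\,\#\mT^\al_\ga(r),$$
so I would substitute the closed form for $\#\mT^\al_\ga(r)$ from Proposition~\ref{prop:cardT} and interchange the two summations. This immediately produces the right-hand side of~\eqref{eq:GF-explicit}, modulo justifying the truncation of the two summation ranges.

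Justifying the ranges is the only bookkeeping step. The inner sum is restricted to $\ell(\la)\leq n-k-r+1$ because Proposition~\ref{prop:cardT} explicitly sets $\#\mT^\al_\la(r)=0$ whenever $n-k-\ell(\la)-r+1<0$; the outer sum terminates at $r=n-m$ because the factor $\binom{n+k-1}{n-m-r}$ vanishes as soon as $n-m-r$ becomes negative. Together these two vanishings absorb every index that falls outside the indicated ranges, so the resulting identity is exactly~\eqref{eq:GF-explicit}.

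The ``moreover'' clause requires no new argument. The identity $\nsig_\la^\al=[p_\la(\xx)]\GG_n^\al(\xx,1)$ is precisely~\eqref{genseriestosig}, which follows from the definition of $\GG_n^\al(\xx,t)$ upon setting $t=1$ (so that the $\uc(\pi,A)$ exponent collapses) and from the fact that $\{p_\la\}_{\la\vdash n}$ is a basis of the degree-$n$ symmetric functions. The probability identity $\psig_\la^\al=\nsig_\la^\al/\big(\binom{n}{\al_1,\ldots,\al_k,n-m}\#\mathcal{C}_\la\big)$ is a direct restatement of~\eqref{eq:probtoenumsig}. Since the whole argument is essentially a substitution, there is no genuine obstacle: the substantive content of the theorem is carried by Propositions~\ref{prop:generating-function} and~\ref{prop:cardT}, with the present statement serving as their consolidated form ready for the applications of Section~\ref{sec:results}.
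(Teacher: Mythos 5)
Your proposal is correct and matches the paper exactly: the paper itself states that Theorem~\ref{thm:GF} is the direct consequence of Propositions~\ref{prop:generating-function} and~\ref{prop:cardT}, obtained by the substitution you describe, with the summation ranges truncated by the vanishing of $\#\mT^\al_\ga(r)$ for $\ell(\ga)>n-k-r+1$ and of $\binom{n+k-1}{n-m-r}$ for $r>n-m$, and with the ``moreover'' clause being a restatement of \eqref{genseriestosig} and \eqref{eq:probtoenumsig}.
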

Theorem~\ref{thm:GF} is the direct consequence of Propositions~\ref{prop:generating-function} and~\ref{prop:cardT}. One of the striking features of~\eqref{eq:GF-explicit} is that the expression of $\GG_n^\al(\xx,t+k)$ depends on $\al$ only through its size and length.
This ``symmetry property'' then obviously also holds for $\nsig_{\la}^\al=[p_{\la}(\xx)]\GG_n^\al(\xx,1)$, and translates into the formula \eqref{eq:probsep} for separation probabilities as stated below. 

\begin{corollary}\label{cor:sepsym}
Let $\la$ be a partition of $n$, and let $\al=(\al_1,\ldots,\al_k)$ and $\be=(\be_1,\ldots,\be_k)$ be compositions of the same size $m$ and length $k$. Then, 
\begin{equation} \label{eq:sepsym}
\nsig_{\la}^\al=\nsig_{\la}^{\beta},
\end{equation} 
or equivalently, in terms of separation probabilities, $\displaystyle ~\frac{\psig_{\la}^\al}{\prod_{i=1}^k \alpha_i!}=\frac{\psig_{\la}^{\beta}}{\prod_{i=1}^k \beta_i!}$.
\end{corollary}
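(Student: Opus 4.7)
The plan is to derive Corollary~\ref{cor:sepsym} directly from the explicit formula \eqref{eq:GF-explicit} in Theorem~\ref{thm:GF}. The crucial observation is that the right-hand side of \eqref{eq:GF-explicit} depends on the composition $\al$ \emph{only} through its size $m$ and its length $k$: indeed, the binomial coefficient $\binom{n+k-1}{n-m-r}$, the summation bound $\ell(\la)\le n-k-r+1$, and the rational factor $\frac{n(n-\ell(\la))!(n-k-r)!}{(n-k-r-\ell(\la)+1)!}$ all involve only $n$, $m$, $k$, $r$, and $\ell(\la)$. Hence if $\al$ and $\be$ are two compositions of the same size $m$ and the same length $k$, then
\[
\GG_n^\al(\xx,t+k) = \GG_n^\be(\xx,t+k)
\]
as polynomials in $t$ with coefficients in the ring of symmetric functions in $\xx$.

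Specializing at $t=1-k$ yields $\GG_n^\al(\xx,1) = \GG_n^\be(\xx,1)$, and extracting the coefficient of $p_\la(\xx)$ in the power-sum basis gives, via the identity $\nsig_\la^\al = [p_\la(\xx)]\GG_n^\al(\xx,1)$ from Theorem~\ref{thm:GF}, the equality $\nsig_\la^\al = \nsig_\la^\be$. This establishes \eqref{eq:sepsym}.

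For the equivalent statement about probabilities, I would substitute the identity $\nsig_\la^\al = \binom{n}{\al_1,\ldots,\al_k,n-m}\,\#\mathcal{C}_\la\,\psig_\la^\al$ from Theorem~\ref{thm:GF}, rewrite the multinomial coefficient as $\frac{n!}{\al_1!\cdots\al_k!\,(n-m)!}$, and likewise for $\be$. Since $\al$ and $\be$ share the same $m$, the factors $n!/(n-m)!$ and $\#\mathcal{C}_\la$ cancel from both sides of $\nsig_\la^\al=\nsig_\la^\be$, leaving precisely
\[
\frac{\psig_\la^\al}{\prod_{i=1}^k \al_i!} = \frac{\psig_\la^\be}{\prod_{i=1}^k \be_i!}.
\]

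There is essentially no obstacle here: the entire content of the corollary is absorbed into the symmetry already visible on the right-hand side of \eqref{eq:GF-explicit}. All the real work has been done in Propositions~\ref{prop:generating-function} and \ref{prop:cardT}, and in particular in the symmetry of the Morales--Vassilieva formula \eqref{eq:colored-factorizations}, which depends on $\ga,\de$ only through their lengths. The corollary is simply the extraction of this symmetry into the language of separation probabilities.
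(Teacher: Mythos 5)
Your proof is correct and follows exactly the paper's own argument: the right-hand side of \eqref{eq:GF-explicit} depends on $\al$ only through its size and length, so the symmetry passes to $\nsig_\la^\al=[p_\la(\xx)]\GG_n^\al(\xx,1)$ and then, via \eqref{eq:probtoenumsig}, to the stated identity for the probabilities. No issues.
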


We now derive explicit formulas for the separation probabilities for the product of a uniformly random permutation $\pi$, with particular constraints on its cycle type, with a uniformly random $n$-cycle. We focus on two constraints: the case where $\pi$ is required to have $p$ cycles, and the case where $\pi$ is a fixed-point-free involution (for $n$ even).

\subsection{Case when $\pi$ has exactly $p$ cycles}
Let $\mC(n,p)$ denote the set of permutations of $[n]$ having $p$ cycles. Recall that the numbers $c(n,p)=\#\mC(n,p)=[x^p]x(x+1)(x+2)\cdots(x+n-1)$ are called the \emph{signless Stirling numbers of the first kind}. We denote by $\psig^{\al}(n,p)$ the probability that the product of a uniformly random permutation in $\mC(n,p)$ with a uniformly random $n$-cycle is $A$-separated for a given set $A$ in $\mA_n^\al$. By a reasoning similar to the one used in the proof of \eqref{eq:probtoenumsig}, one gets
\begin{equation}\label{eq:probtoenumsigP}
\psig^{\al}(n,p)=\frac{1}{{n \choose \al_1,\al_2,\ldots,\al_k,n-m}c(n,p)}\sum_{\la \vdash n, \ell(\la)=p} \nsig_{\la}^\al.
\end{equation} 
We now compute the probabilities $\psig^{\al}(n,p)$ explicitly.

\begin{theorem} \label{thm:casePcycles}
Let $\alpha$ be a composition of $m$ with $k$ parts. Then,
\begin{equation} \label{eq:probsigmaP}
\psig^{\al}(n,p) = \frac{(n-m)! \prod_{i=1}^k \alpha_i!}{c(n,p)}\sum_{r=0}^{n-m} \binom{1-k}{r} \binom{n+k-1}{n-m-r}\frac{c(n-k-r+1,p)}{(n-k-r+1)!},
\end{equation}
where $c(n,p)$ are signless Stirling numbers of the first kind.
\end{theorem}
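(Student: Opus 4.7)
The plan is to evaluate the generating function $\GG_n^\al(\xx,1)$ from Theorem~\ref{thm:GF} at the specialization $x_1=\cdots=x_q=1$, $x_{q+1}=x_{q+2}=\cdots=0$, which converts the required sum into a coefficient of $q^p$ in a single-variable polynomial. Since $p_r(1^q)=q$ for $r\geq 1$, one has $p_\la(1^q)=q^{\ell(\la)}$, and the relation $\nsig_\la^\al=[p_\la(\xx)]\GG_n^\al(\xx,1)$ of Theorem~\ref{thm:GF} gives
$$\GG_n^\al(1^q,1)=\sum_{\la\vdash n}\nsig_\la^\al\, q^{\ell(\la)}, \qquad \text{hence}\qquad \sum_{\la\vdash n,\,\ell(\la)=p}\nsig_\la^\al=[q^p]\,\GG_n^\al(1^q,1).$$

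Next I would compute this polynomial in $q$ from the explicit formula \eqref{eq:GF-explicit} at $t=1-k$. The identity $m_\la(1^q)=\binom{q}{\ell(\la)}\,\ell(\la)!/\prod_i n_i(\la)!$ together with the fact that there are $\binom{n-1}{\ell-1}$ compositions of $n$ with $\ell$ parts gives $\sum_{\la\vdash n,\,\ell(\la)=\ell}m_\la(1^q)=\binom{q}{\ell}\binom{n-1}{\ell-1}$. Cancelling $(n-\ell)!$ against $\binom{n-1}{\ell-1}$ and writing $M:=n-k-r+1$ then yields
$$\GG_n^\al(1^q,1)=\sum_{r=0}^{n-m}\binom{1-k}{r}\binom{n+k-1}{n-m-r}\cdot\frac{n!}{M}\sum_{\ell=1}^{M}\ell\binom{M}{\ell}\binom{q}{\ell}.$$

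The key algebraic step, which I expect to be the main obstacle, is the evaluation of the inner $\ell$-sum. Using $\ell\binom{q}{\ell}=q\binom{q-1}{\ell-1}$ and Vandermonde's identity one finds $\sum_{\ell\geq 1}\ell\binom{M}{\ell}\binom{q}{\ell}=q\binom{M+q-1}{M-1}$, and therefore
$$\frac{n!}{M}\sum_{\ell=1}^{M}\ell\binom{M}{\ell}\binom{q}{\ell}=\frac{n!}{M!}\,q(q+1)(q+2)\cdots(q+M-1)=\frac{n!}{M!}\sum_{p\geq 0}c(M,p)\,q^p,$$
the last equality being the defining generating function of the unsigned Stirling numbers of the first kind. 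Extracting $[q^p]$, substituting $M=n-k-r+1$, and dividing by $\binom{n}{\al_1,\ldots,\al_k,n-m}c(n,p)=n!\,c(n,p)/(\prod_i\al_i!\,(n-m)!)$ as prescribed by \eqref{eq:probtoenumsigP} produces exactly the claimed formula for $\psig^\al(n,p)$.
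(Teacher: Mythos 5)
Your argument is correct and is essentially the paper's proof: both reduce to $\sum_{\la\vdash n,\,\ell(\la)=p}\nsig_\la^\al$ via \eqref{eq:probtoenumsigP} and Theorem~\ref{thm:GF}, and both rely on the principal specialization $\xx=1^q$ with $p_\la(1^q)=q^{\ell(\la)}$ and $\sum_{\la\vdash n,\,\ell(\la)=\ell}m_\la(1^q)=\binom{n-1}{\ell-1}\binom{q}{\ell}$, followed by extraction of $[q^p]$. The only difference is organizational: you perform the $\ell$-summation first (one Vandermonde identity, landing directly on the rising factorial $q(q+1)\cdots(q+M-1)$) and extract the coefficient once at the end, whereas the paper extracts the coefficient per term (Lemma~\ref{lem:coeff-k-cycles}, producing $c(\ell,p)$) and then re-sums with a second Chu--Vandermonde identity (Lemma~\ref{lem:simplifyP}); your ordering slightly streamlines the same computation.
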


For instance, Theorem \ref{thm:casePcycles}  in the case $m=n$ gives the probability that the cycles of the product of a uniformly random permutation in $\mC(n,p)$ with a uniformly random $n$-cycle refine a given set partition of $[n]$ having blocks of sizes $\al_1,\al_2,\ldots,\al_k$. This probability is found to be 
$$
\psig^{\al}(n,p)= \frac{\prod_{i=1}^k \alpha_i!}{c(n,p)}\,\frac{c(n-k+1,p)}{(n-k+1)!}.
$$

We now prove Theorem~\ref{thm:casePcycles}. Via~\eqref{eq:probtoenumsigP}, this amounts to enumerating $\mS^{\al}(n,p):=\bigcup_{\la \vdash n, \ell(\la)=p}\mS_{\la}^\al$, and using Theorem~\ref{thm:GF} one gets
\begin{eqnarray}\label{eq:formsigmaPeq1}
\nsig^\al(n,p)&=&\sum_{\la \vdash n,\ell(\la)=p} [p_{\la}(\xx)] \, \GG_n^\al(\xx,1)\nonumber \\
&=& \sum_{r=0}^{n-m}\binom{1-k}{r}\binom{n+k-1}{n-m-r}\sum_{\ell=1}^{n-k-r+1}   \frac{n(n-\ell)!(n-k-r)!}{(n-k-r-\ell+1)!}\, A(n,p,\ell),
\end{eqnarray}
where $\displaystyle A(n,p,\ell):=\sum_{\mu \vdash n,~ \ell(\mu)=p} [p_{\mu}(\xx)] \sum_{\la \vdash n,~ \ell(\la)=\ell} m_{\la}(\xx)$.
The next lemma gives a formula for $A(n,p,\ell)$.
\begin{lemma}\label{lem:coeff-k-cycles}
For any positive integers $p,\ell\leq n$
\begin{equation}\label{eq:coeff-k-cycles}
\sum_{\mu \vdash n,~ \ell(\mu)=p} [p_{\mu}(\xx)] \sum_{\la \vdash n,~ \ell(\la)=\ell} m_{\la}(\xx) ~=~ {n-1 \choose \ell-1}\frac{(-1)^{\ell-p} c(\ell,p)}{\ell!},
\end{equation}
where $c(a,b)$ are the signless Stirling numbers of the first kind.
\end{lemma}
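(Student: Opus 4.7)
My plan is to express both sides via a single three-variable generating function that tracks $n$, $\ell(\la)$, and $\ell(\mu)$, and then reduce the identity to extracting coefficients from a compact closed form. Write $M_\ell(\xx) := \sum_{\la\vdash n,\,\ell(\la)=\ell} m_\la(\xx)$. The starting point is the standard infinite-product identity
\[
\sum_{n\geq 0}\sum_{\ell\geq 0} t^\ell z^n M_\ell(\xx) \;=\; \prod_{j\geq 1}\left(1+\frac{tx_jz}{1-x_jz}\right),
\]
which simply records that each variable $x_j$ in a monomial of $M_\ell(\xx)$ either does not appear, or appears with multiplicity $\geq 1$, contributing $t$ to the length and $z^a$ to the size.

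The next step is to convert to the power-sum basis. Writing each factor as $(1-(1-t)x_jz)/(1-x_jz)$ and taking logarithms collapses the product into
\[
\log\prod_{j\geq 1}\left(1+\frac{tx_jz}{1-x_jz}\right) \;=\; \sum_{k\geq 1}\frac{1-(1-t)^k}{k}\, z^k p_k(\xx).
\]
Exponentiating and applying the standard expansion $\exp(\sum_k a_k p_k)=\sum_\mu \prod_k a_k^{n_k(\mu)}/n_k(\mu)!\,p_\mu$ gives a completely explicit formula for $[z^np_\mu]$ of the infinite product. To impose the restriction $\ell(\mu)=p$ on the $\mu$-sum, I would insert a bookkeeping variable $s$ that multiplies each $p_k$ inside the exponential; since $\sum_k n_k(\mu)=\ell(\mu)$, the length of $\mu$ is tracked by powers of $s$, and summing over $\mu$ collapses back into a closed form by the same logarithm computation:
\[
\sum_{\mu}\frac{s^{\ell(\mu)}}{z_\mu}\prod_k\bigl(1-(1-t)^k\bigr)^{n_k(\mu)} z^{|\mu|} \;=\; \exp\left(s\log\frac{1-(1-t)z}{1-z}\right) \;=\; \left(1+\frac{tz}{1-z}\right)^{\!s}.
\]

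At this point the lemma reduces to an elementary coefficient extraction: the left-hand side of \eqref{eq:coeff-k-cycles} equals $[t^\ell s^p z^n]\bigl(1+tz/(1-z)\bigr)^{s}$. The generalized binomial theorem expands this as $\sum_\ell \binom{s}{\ell} t^\ell z^\ell(1-z)^{-\ell}$, whose $[t^\ell z^n]$-coefficient is $\binom{s}{\ell}\binom{n-1}{\ell-1}$, and then $[s^p]\binom{s}{\ell}=(-1)^{\ell-p}c(\ell,p)/\ell!$ is the defining relation of the signless Stirling numbers of the first kind. Multiplying the three factors yields exactly the right-hand side of \eqref{eq:coeff-k-cycles}. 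The step requiring the most care is setting up the $\exp$-to-product telescoping cleanly, since it performs the key simplification of turning a potentially unwieldy $p$-basis sum into a tractable rational function; the rest is routine power-series manipulation.
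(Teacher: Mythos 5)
Your proof is correct, and at its core it is the same argument as the paper's: both introduce an auxiliary variable marking $\ell(\mu)$ via the substitution $p_\mu \mapsto s^{\ell(\mu)}$ (the paper realizes this as the principal specialization $\xx=1^a$, using $p_\gamma(1^a)=a^{\ell(\gamma)}$), both arrive at the same key intermediate identity --- in your notation $[t^\ell z^n]\bigl(1+\tfrac{tz}{1-z}\bigr)^{s}=\binom{s}{\ell}\binom{n-1}{\ell-1}$, in the paper's $\sum_{\la\vdash n,\ \ell(\la)=\ell}m_\la(1^a)=\binom{n-1}{\ell-1}\binom{a}{\ell}$ --- and both finish with the extraction $[s^p]\binom{s}{\ell}=(-1)^{\ell-p}c(\ell,p)/\ell!$. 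Where you differ is in how that intermediate identity is obtained: the paper counts $a$-tuples of nonnegative integers with exactly $\ell$ positive entries summing to $n$, a two-line combinatorial argument, whereas you derive it from the product generating function $\prod_j\bigl(1+\tfrac{tx_jz}{1-x_jz}\bigr)$ by passing through the power-sum basis with the log/exp machinery. Your route is heavier but more systematic: it produces the full expansion of $\sum_{\ell}t^\ell M_\ell(\xx)$ in the $p_\mu$ basis as a byproduct, and it works with an indeterminate $s$ throughout, so you never need the (routine but implicit) step of upgrading an identity valid for all positive integers $a$ to a polynomial identity. The paper's specialization argument gets to the same place faster by exploiting that $m_\la(1^a)$ has a direct enumerative meaning.
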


\begin{proof}
For this proof we use the principal specialization of symmetric functions, that is, their evaluation at $\xx=1^a:=\{1,1,\ldots,1,0,0\ldots\}$ ($a$ ones). Since $p_{\ga}(1^a)=a^{\ell(\ga)}$ for any positive integer $a$, one gets

$$\sum_{\la \vdash n,~ \ell(\la)=\ell} m_{\la}(1^a)=\sum_{p=1}^n a^p \sum_{\mu \vdash n,~ \ell(\mu)=p} [p_{\mu}(\xx)] \sum_{\la \vdash n,~ \ell(\la)=\ell} m_{\la}(\xx).
$$
The right-hand side of the previous equation is a polynomial in $a$, and by extracting the coefficient of $a^p$  one gets
\begin{equation}\label{eq:extract-coeff}
\sum_{\mu \vdash n,~ \ell(\mu)=p} [p_{\mu}(\xx)] \sum_{\la \vdash n,~ \ell(\la)=\ell} m_{\la}(\xx) =[a^p]\sum_{\la \vdash n,~ \ell(\la)=\ell} m_{\la}(1^a).\nonumber
\end{equation}
Now, for any partition $\la$,  $m_{\la}(1^a)$ counts the $a$-tuples of nonnegative integers such that the positive ones are the same as the parts of $\la$ (in some order). Hence $\displaystyle \sum_{\la \vdash n,~ \ell(\la)=\ell} m_{\la}(1^a)$ counts the $a$-tuples of nonnegative integers with $\ell$ positive ones summing to $n$. This gives, 
\begin{equation}\label{eq:extract-coeff2}
\sum_{\la \vdash n,~ \ell(\la)=\ell} m_{\la}(1^a)={n-1 \choose \ell-1}{a \choose \ell}.\nonumber
\end{equation}
Extracting the coefficient of $a^p$ gives~\eqref{eq:coeff-k-cycles} since $\displaystyle [a^p]{a \choose \ell}=\frac{(-1)^{\ell-p}\,c(\ell,p)}{\ell!}$.
\end{proof}

Using Lemma~\ref{lem:coeff-k-cycles} in~\eqref{eq:formsigmaPeq1} gives
\begin{equation} \label{eq:formsigmaPeq2}
\nsig^\al(n,p) = n! \sum_{r\geq 0}^{n-m} \binom{1-k}{r} \binom{n+k-1}{n-m-r} \sum_{\ell=1}^{n-k-r+1} \binom{n-k-r}{\ell-1}\frac{(-1)^{\ell-p}c(\ell,p)}{\ell!},
\end{equation} 
which we simplify using the following lemma.
\begin{lemma}\label{lem:simplifyP}
For any nonnegative integer $a$, $\displaystyle \sum_{q=0}^{a} \binom{a}{q}\frac{(-1)^{q+1-p}\,c(q+1,p)}{(q+1)!} = \frac{c(a+1,p)}{(a+1)!}$.
\end{lemma}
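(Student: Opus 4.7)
The plan is to recognize both sides of the identity as the coefficient of $x^p$ in a polynomial expression, and then apply Vandermonde's identity.

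First I would rewrite the signless Stirling numbers in terms of falling and rising factorials. Since $\sum_p c(n,p)\,x^p = x(x+1)\cdots(x+n-1)$, dividing by $n!$ gives
\[
\frac{c(n,p)}{n!} = [x^p]\binom{x+n-1}{n}.
\]
On the other hand, the signed Stirling numbers $s(n,p) = (-1)^{n-p}c(n,p)$ give $x^{\underline{n}} = \sum_p s(n,p)\,x^p$, so
\[
\frac{(-1)^{n-p}c(n,p)}{n!} = [x^p]\binom{x}{n}.
\]
Applying this with $n = q+1$ to the summand of the LHS, and with $n = a+1$ (after shifting $x \mapsto x+a$) to the RHS, I would reduce the identity to showing
\[
\sum_{q=0}^{a} \binom{a}{q}\binom{x}{q+1} = \binom{x+a}{a+1}
\]
as polynomials in $x$, from which the lemma follows by extracting $[x^p]$.

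This last binomial identity is a direct consequence of the Vandermonde--Chu identity $\binom{m+n}{k} = \sum_{j}\binom{m}{k-j}\binom{n}{j}$: setting $m = a$, $n = x$, $k = a+1$ and reindexing $j = q+1$ gives exactly
\[
\binom{x+a}{a+1} = \sum_{q}\binom{a}{a-q}\binom{x}{q+1} = \sum_{q=0}^{a}\binom{a}{q}\binom{x}{q+1},
\]
since $\binom{x}{q+1} = 0$ for $q+1 > a+1$ does not actually vanish (as $x$ is an indeterminate), but the terms with $q > a$ drop out because $\binom{a}{q} = 0$. There is no real obstacle here; the only subtlety is keeping the signs consistent when passing between $c(n,p)$ and $s(n,p)$, which is handled uniformly by the substitution $x \mapsto x+a$ (converting a falling factorial in $x+a$ into a rising factorial in $x$). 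A brief sanity check for small $a, p$ (e.g.\ $a=2$, $p=1$: LHS $= 1 - 1 + 1/3 = 1/3 = c(3,1)/3!$) confirms the signs.
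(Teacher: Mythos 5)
Your proof is correct and is essentially identical to the paper's: both recognize the left-hand side as $[x^p]\sum_{q=0}^{a}\binom{a}{q}\binom{x}{q+1}$ and the right-hand side as $[x^p]\binom{x+a}{a+1}$, and conclude by the Chu--Vandermonde identity. The sign bookkeeping via $s(n,p)=(-1)^{n-p}c(n,p)$ and the rising-factorial interpretation of $\binom{x+a}{a+1}$ is exactly what the paper's one-line argument implicitly uses.
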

\begin{proof}
The left-hand side equals $[x^p] \sum_{q=0}^a \binom{a}{q}\binom{x}{q+1}$. Using the Chu-Vandermonde identity this equals $[x^p] \binom{x+a}{a+1}$ which is precisely the right-hand side.
\end{proof}
Using Lemma~\ref{lem:simplifyP} in~\eqref{eq:formsigmaPeq2} gives
\begin{equation} \label{eq:formsigmaPeq3}
\nsig^\al(n,p)=n! \sum_{r=0}^{n-m} \binom{1-k}{r} \binom{n+k-1}{n-m-r}\frac{c(n-k-r+1,p)}{(n-k-r+1)!} ,
\end{equation}
which is equivalent to~\eqref{eq:probsigmaP} via~\eqref{eq:probtoenumsig}. This completes the proof of Theorem~\ref{thm:casePcycles}. \hfill $\square$\\

In the case $p=1$, the expression \eqref{eq:probsigmaP} for the probability $\psig^{\al}(1)=\psig^{\al}_{(n)}$ can be written as a sum of $m-k$ terms instead. We state this below.
\begin{corollary}\label{cor:alternative-sum}
Let $\alpha$ be a composition of $m$ with $k$ parts. Then the separation probabilities $\psig_{(n)}^\al$ (separation for the product of two uniformly random $n$-cycles) are 
\begin{equation} \nonumber
\psig_{(n)}^\al= \frac{(n-m)!\prod_{i=1}^k \alpha_i!}{(n+k)(n-1)!}\left(\frac{(-1)^{n-m}\binom{n-1}{k-2}}{\binom{n+m}{m-k}}+ \sum_{r=0}^{m-k}\frac{(-1)^r\binom{m-k}{r} \binom{n+r+1}{m}}{\binom{n+k+r}{r}}\right).
\end{equation}
\end{corollary}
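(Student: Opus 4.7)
The plan is to derive Corollary~\ref{cor:alternative-sum} directly from Theorem~\ref{thm:casePcycles} in the case $p=1$. Since $(n)$ is the unique partition of $n$ of length one, $\psig^\al(n,1)=\psig_{(n)}^\al$. Substituting the familiar values $c(n,1)=(n-1)!$ and $c(n-k-r+1,1)=(n-k-r)!$ into~\eqref{eq:probsigmaP} (which is valid for every $r\in[0,n-m]$, as $m\geq k$ implies $n-m\leq n-k$) collapses the Stirling quotient to $1/(n-k-r+1)$, so the corollary reduces to the binomial identity
$$S \;=\; \frac{1}{n+k}\left(\frac{(-1)^{n-m}\binom{n-1}{k-2}}{\binom{n+m}{m-k}}+\sum_{r=0}^{m-k}\frac{(-1)^r\binom{m-k}{r}\binom{n+r+1}{m}}{\binom{n+k+r}{r}}\right),$$
where $S:=\sum_{r=0}^{n-m}\binom{1-k}{r}\binom{n+k-1}{n-m-r}\frac{1}{n-k-r+1}$.

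My strategy is to recast $S$ as a single coefficient in a generating function so that it can be evaluated in closed form. After reindexing $r\mapsto n-m-r$ and writing $\frac{1}{r+m-k+1}=\int_0^1 t^{r+m-k}\,dt$, the Vandermonde-type identity $\sum_r\binom{a}{n-r}\binom{b}{r}t^r=[z^n](1+z)^a(1+tz)^b$ identifies the inner sum and gives
$$S=[z^{n-m}](1+z)^{1-k}\int_0^1 t^{m-k}(1+tz)^{n+k-1}\,dt.$$
The substitution $u=tz$ turns the inner integral into $z^{-(m-k+1)}I(z)$ with $I(z):=\int_0^z u^{m-k}(1+u)^{n+k-1}\,du$, and absorbing the shift of powers of $z$ yields the clean expression
$$S=[z^{n-k+1}](1+z)^{1-k}\,I(z).$$

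It remains to evaluate $I(z)$. Integrating by parts $m-k$ times, each time antidifferentiating the $(1+u)$-factor and differentiating the remaining power of $u$, one obtains
$$I(z)=\sum_{i=0}^{m-k}(-1)^i\frac{(m-k)!(n+k-1)!}{(m-k-i)!(n+k+i)!}\,z^{m-k-i}(1+z)^{n+k+i}\;-\;(-1)^{m-k}\frac{(m-k)!(n+k-1)!}{(n+m)!}.$$
Extracting $[z^{n-k+1}](1+z)^{1-k}$ term by term, the $i$th summand contributes $\binom{n+i+1}{m}$ (since $[z^{n-m+i+1}](1+z)^{n+i+1}=\binom{n+i+1}{m}$), while the constant yields $\binom{1-k}{n-k+1}=(-1)^{n-k+1}\binom{n-1}{k-2}$. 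Using $\binom{n+m}{m-k}=\frac{(n+m)!}{(m-k)!(n+k)!}$ to simplify the constant term, and $\frac{i!(n+k-1)!}{(n+k+i)!}=\frac{1}{(n+k)\binom{n+k+i}{i}}$ to factor $1/(n+k)$ out of the sum, the two pieces assemble into exactly the right-hand side of the displayed identity.

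The main obstacle is identifying the integral representation that converts a sum of $n-m+1$ terms into a sum of $m-k+1$ terms plus a boundary correction; once $S$ is written as $[z^{n-k+1}](1+z)^{1-k}I(z)$, the integration by parts and coefficient extraction proceed routinely.
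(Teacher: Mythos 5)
Your proposal is correct and follows essentially the same route as the paper: both start from the $p=1$ case of Theorem~\ref{thm:casePcycles}, rewrite the sum as a coefficient extraction of $(1+x)^{1-k}$ times $\sum_r \frac{x^r}{r+m-k+1}\binom{n+k-1}{r}$, and evaluate that inner sum via the integral $\int_0^x t^{m-k}(1+t)^{n+k-1}\,dt$ integrated by parts $m-k$ times (the paper packages this step as Lemma~\ref{lem:change-summation}). The computations check out, so this is a valid proof matching the paper's argument.
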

The equation in Corollary~\ref{cor:alternative-sum}, already stated in the introduction, is particularly simple when $m-k$ is small. For $\al=1^k$ (i.e. $m=k$) one gets the result stated at the beginning of this paper:
\begin{equation} 
\label{probkones}
\psig^{1^k}_{(n)}=\begin{cases}
\frac{1}{k!} &\text{ if } n-k \text{ odd,}\\
\frac{1}{k!} + \frac{2}{(k-2)!(n-k+1)(n+k)} &\text{ if } n-k \text{ even.}
\end{cases}
\end{equation}\\

In order to prove Corollary~\ref{cor:alternative-sum} we start with the expression obtained by setting $p=1$ in \eqref{eq:probsigmaP}:
\begin{eqnarray}\label{eq:formsigmaPeq4}
\psig^\al_{(n)}&=&\frac{(n-m)!\prod_{i=1}^k \alpha_i!}{(n-1)!}\sum_{r=0}^{n-m} \binom{1-k}{r} \frac{1}{n-k-r+1} \binom{n+k-1}{n-m-r}\nonumber \\
&=&\frac{(n-m)!\prod_{i=1}^k \alpha_i!}{(n-1)!}[x^{n-m}](1+x)^{1-k}\sum_{r=0}^{n+k-1}\frac{x^r}{r+m-k+1}{n+k-1 \choose r}.
\end{eqnarray}
We now use the following polynomial identity.
\begin{lemma}\label{lem:change-summation}
For nonnegative integers $a,b$, one has the following identity between polynomials in~$x$:
\begin{equation}\label{eq:change-summation}
\sum_{i=0}^a\frac{x^i}{i+b+1}{a \choose i}=\frac{1}{(a+1)} 
\left( \frac{1}{{a+b+1 \choose b}(-x)^{b+1}}- \sum_{i=0}^b\frac{{b \choose i}(x+1)^{a+i+1}}{{a+i+1 \choose i}(-x)^{i+1}}\right).
\end{equation}
\end{lemma}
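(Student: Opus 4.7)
My plan is to interpret the left-hand side as an integral. Since $\frac{x^i}{i+b+1}=\frac{1}{x^{b+1}}\int_0^x t^{i+b}\,dt$, binomial expansion of $(1+t)^a$ gives
\[
\sum_{i=0}^a\binom{a}{i}\frac{x^i}{i+b+1}=\frac{1}{x^{b+1}}\int_0^x t^b(1+t)^a\,dt.
\]
Once this is established, proving \eqref{eq:change-summation} amounts to computing the antiderivative of $t^b(1+t)^a$ in a form matching the right-hand side.

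The second step is to evaluate $\int t^b(1+t)^a\,dt$ by iterated integration by parts, differentiating the power of $t$ and integrating the power of $(1+t)$ at each stage. This produces a telescoping recursion
\[
\int t^{b-j}(1+t)^{a+j}\,dt=\frac{t^{b-j}(1+t)^{a+j+1}}{a+j+1}-\frac{b-j}{a+j+1}\int t^{b-j-1}(1+t)^{a+j+1}\,dt,
\]
which terminates at $j=b$. Collecting the accumulated coefficients and using the identity $\frac{b!\,a!}{(b-j)!\,(a+j+1)!}=\frac{\binom{b}{j}}{(a+1)\binom{a+j+1}{j}}$ gives the closed form
\[
\int t^b(1+t)^a\,dt=\frac{1}{a+1}\sum_{j=0}^{b}\frac{(-1)^j\binom{b}{j}\,t^{b-j}(1+t)^{a+j+1}}{\binom{a+j+1}{j}}+C.
\]
Once this formula is written down, correctness can be double-checked by direct differentiation, so there is no loss in taking it on faith at first.

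The last step is to evaluate between $0$ and $x$ and divide by $x^{b+1}$. At $t=0$ only the $j=b$ term survives, contributing $-\frac{(-1)^b}{(a+1)\binom{a+b+1}{b}}$ to the answer. Rewriting each $\frac{(-1)^j}{x^{j+1}}$ as $\frac{-1}{(-x)^{j+1}}$ then transforms the resulting expression into precisely the right-hand side of \eqref{eq:change-summation}. I expect the main obstacle to be bookkeeping: tracking alternating signs through the telescoping, and converting the ``pure'' factorial form $\frac{b!\,a!}{(b-j)!\,(a+j+1)!}$ into the binomial form $\frac{\binom{b}{j}}{(a+1)\binom{a+j+1}{j}}$ that appears in the statement.
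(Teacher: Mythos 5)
Your proposal is correct and follows essentially the same route as the paper: the paper likewise rewrites the left-hand side as $\frac{1}{x^{b+1}}\int_0^x (1+t)^a t^b\,dt$ and then computes the integral by repeated integration by parts (induction on $b$). Your explicit closed form for the antiderivative, the factorial-to-binomial conversion, and the endpoint/sign bookkeeping all check out.
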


\begin{proof}
It is easy to see that the left-hand side of~\eqref{eq:change-summation} is equal to $\frac{1}{x^{b+1}}\int_{0}^x (1+t)^at^bdt$. Now this integral can be computed via integration by parts. By a simple induction on $b$, this gives the right-hand side of~\eqref{eq:change-summation}.
\end{proof}

Now using~\eqref{eq:change-summation} in \eqref{eq:formsigmaPeq4}, with $a=n+k-1$ and $b=m-k$, gives
\begin{eqnarray}
\psig^\al_{(n)} &=& \frac{(n-m)!\prod_{i=1}^k \alpha_i!}{(n+k)(n-1)!} [x^{n-m}] \left( \frac{(1+x)^{1-k}}{\binom{n+m}{m-k}(-x)^{m-k+1} }-\sum_{r=0}^{m-k} \frac{\binom{m-k}{r} (1+x)^{n+r+1}}{\binom{n+k+r}{r} (-x)^{r+1}}\right)\nonumber\\
&= &\frac{(n-m)!\prod_{i=1}^k \alpha_i!}{(n+k)(n-1)!}\left(\frac{(-1)^{n-m}\binom{n-1}{k-2}}{\binom{n+m}{m-k}}+ \sum_{r=0}^{m-k}\frac{(-1)^r\binom{m-k}{r} \binom{n+r+1}{m}}{\binom{n+k+r}{r}}\right).\nonumber
\end{eqnarray}

This completes the proof of Corollary~\ref{cor:alternative-sum}. \hfill $\square$

\subsection{Case when $\pi$ is a fixed-point-free involution}\label{subsec:involution}
Given a composition $\alpha$ of $m\leq 2N$ with $k$ parts, we define 
\[
H_{N}^{\alpha}(t):= \sum_{(\pi,A) \in \mS_{2^N}^{\alpha}} t^{\uc(\pi,A)},
\]
where $\uc(\pi,A)$ is the number of cycles of the product $\pi \circ (1,2,\ldots,2N)$ containing none of the elements of $A$ and where $\pi$ is a fixed-point-free involution of $[2N]$. Note that $H_{N}^{\alpha}(t) = [p_{2^N}(\xx)]\, G^{\alpha}_{2N}(\xx,t)$. We now give an explicit expression for this series.

\begin{theorem} \label{thm:GF-Maps}
For any composition $\alpha$ of $m\leq 2N$ of length $k$, the generating series $H_{N}^{\alpha}(t+k)$ is given by
\begin{equation}\label{eq:GF-Mapsexplicit}
H_{N}^{\alpha}(t+k) = N\sum_{r=0}^{\min(2N-m,N-k+1)} \binom{t}{r}\binom{2N+k-1}{2N-m-r} 2^{k+r-N} \frac{(2N-k-r)!}{(N-k-r+1)!}.
\end{equation}
Consequently the separation probabilities for the product of a fixed-point-free involution with a $2N$-cycle are given by
\begin{equation}\label{eq:separation-involution}
\psig_{2^N}^\al=\frac{\prod_{i=1}^k\al_i!}{(2N-1)!(2N-1)!!}\sum_{r=0}^{\min(2N-m,N-k+1)} \binom{1-k}{r}\binom{2N+k-1}{2N-m-r} 2^{k+r-N-1} \frac{(2N-k-r)!}{(N-k-r+1)!}.
\end{equation}
\end{theorem}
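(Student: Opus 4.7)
The plan is to extract the coefficient of $p_{2^N}(\xx)$ from the explicit expression~\eqref{eq:GF-explicit} for $G_{2N}^\alpha(\xx,t+k)$ already given by Theorem~\ref{thm:GF}. Since in~\eqref{eq:GF-explicit} the coefficient of $m_\la(\xx)$ depends on $\la$ only through $\ell(\la)$, the task reduces to computing, for each $\ell$, the length-graded sum
\[
B(N,\ell)\;:=\;\sum_{\la\vdash 2N,\,\ell(\la)=\ell}[p_{2^N}(\xx)]\,m_\la(\xx),
\]
and then simplifying the resulting single sum in $r$ and $\ell$.

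For $B(N,\ell)$, I would invoke the Hall scalar product on symmetric functions, under which $\{m_\la\}$ and $\{h_\la\}$ are dual bases ($\langle m_\la,h_\mu\rangle=\delta_{\la\mu}$) while $\langle p_\mu,p_\nu\rangle=z_\mu\delta_{\mu\nu}$. This yields the clean identity $[p_{2^N}]\,m_\la = \langle m_\la,p_{2^N}\rangle/z_{2^N} = [h_\la]\,p_{2^N}/(2^N N!)$. A direct check gives $p_2=2h_2-h_1^2$, so the binomial theorem produces
\[
p_{2^N}=(2h_2-h_1^2)^N=\sum_{j=0}^{N}\binom{N}{j}(-1)^j\,2^{N-j}\,h_{(2^{N-j},1^{2j})}.
\]
Hence $[p_{2^N}]\,m_\la$ vanishes unless $\la=(2^{N-j},1^{2j})$ (which has length $N+j$), in which case it equals $\tfrac{(-1)^j}{2^j\,j!\,(N-j)!}$. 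Setting $\ell=N+j$, one concludes that $B(N,\ell)=\tfrac{(-1)^{\ell-N}}{2^{\ell-N}(\ell-N)!(2N-\ell)!}$ for $N\leq\ell\leq 2N$, and $B(N,\ell)=0$ otherwise.

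Plugging this back, the $(2N-\ell)!$ factor from~\eqref{eq:GF-explicit} cancels against that in the denominator of $B(N,\ell)$. After the change of variables $s=\ell-N$, the inner sum becomes $\sum_{s=0}^{N-k-r+1}\binom{N-k-r+1}{s}(-1/2)^s = 2^{-(N-k-r+1)}$ by the binomial theorem, collapsing the double sum to~\eqref{eq:GF-Mapsexplicit}; the truncation at $\min(2N-m,N-k+1)$ arises from the constraints $2N-m-r\geq 0$ and $N-k-r+1\geq 0$. Formula~\eqref{eq:separation-involution} will then follow by setting $t=1-k$ in~\eqref{eq:GF-Mapsexplicit} to obtain $\nsig_{2^N}^\al=H_N^\alpha(1)$, applying $\psig_{2^N}^\al=\nsig_{2^N}^\al/\bigl[\binom{2N}{\alpha_1,\ldots,\alpha_k,2N-m}\,\#\mathcal{C}_{2^N}\bigr]$ from Theorem~\ref{thm:GF}, and using $\#\mathcal{C}_{2^N}=(2N)!/z_{2^N}=(2N-1)!!$.

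The main obstacle is the computation of $B(N,\ell)$: inverting the monomial-to-power-sum transition matrix directly would be unpleasant, and the crucial shortcut is the duality identity $[p_{2^N}]\,m_\la=[h_\la]\,p_{2^N}/z_{2^N}$, which turns the problem into the easy $h$-basis expansion of $(2h_2-h_1^2)^N$. Once this is in hand, the remaining simplifications are routine binomial manipulations.
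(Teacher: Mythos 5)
Your proposal is correct and follows essentially the same route as the paper: extract $[p_{2^N}(\xx)]$ from \eqref{eq:GF-explicit}, evaluate the length-graded coefficient sums via the Hall-pairing identity $[p_{2^N}]\,m_\la=[h_\la]\,p_{2^N}/z_{2^N}$ together with the expansion $p_{2^N}=(2h_2-h_1^2)^N$, and collapse the inner sum with the binomial theorem before specializing $t=1-k$. The only (cosmetic) difference is that the paper first invokes dominance-order triangularity of the transition matrix between $\{m_\la\}$ and $\{p_\la\}$ to reduce the sum over partitions of length $N+s$ to the single term $m_{1^{2s}2^{N-s}}$ and only then applies the scalar-product computation, whereas you read off both the vanishing and the surviving values simultaneously from the $h$-expansion of $p_{2^N}$. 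One caveat worth flagging: carrying out your final normalization $\psig_{2^N}^\al=\nsig_{2^N}^\al/\bigl[\binom{2N}{\al_1,\ldots,\al_k,2N-m}(2N-1)!!\bigr]$ produces an extra factor $(2N-m)!$ in the numerator relative to the printed \eqref{eq:separation-involution}; this appears to be a typo in the stated formula (e.g.\ for $N=2$, $\al=(1)$ the printed right-hand side gives $1/6$ rather than the obviously correct value $1$), so do not be alarmed if your answer disagrees with the displayed equation by exactly that factor.
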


\begin{rmk}
It is possible to prove Theorem~\ref{thm:GF-Maps} directly using ideas similar to the ones used to prove Theorem~\ref{thm:GF} in Section~\ref{sec:strategy}. This will be explained in more detail in Section~\ref{sec:maps}. In the proof given below, we instead obtain Theorem~\ref{thm:GF-Maps} as a consequence of Theorem~\ref{thm:GF}.
\end{rmk}

The rest of this section is devoted to the proof of Theorem~\ref{thm:GF-Maps}. Since $H_{N}^{\alpha}(t) = [p_{2^N}(\xx)]\, G^{\alpha}_{2N}(\xx,t)$, Theorem~\ref{thm:GF} gives
\begin{eqnarray} \label{eq:fpfinv}
\lefteqn{H_{N}^{\alpha}(t+k)= }\\
& &
\sum_{r=0}^{2N-m} \binom{t}{r} \binom{2N+k-1}{2N-m-r} \sum_{s=0}^{N-k-r+1} \frac{2N(N-s)!(2N-k-r)!}{(N-k-r-s+1)!} [p_{2^N}(\xx)] \sum_{\la \vdash 2N,~ \ell(\la)=N+s}m_{\la}(\xx). \nonumber
\end{eqnarray}
We then use the following result.
\begin{lemma} \label{lemMaps}
For any nonnegative integer $s\leq N$,
\[
[p_{2^N}(\xx)] \sum_{\la \vdash 2N,~\ell(\la)=N+s} m_{\la}(\xx)~=~ \frac{(-1)^s}{2^s s!(N-s)!}.
\]
\end{lemma}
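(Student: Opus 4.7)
My plan is to exploit Hall--inner--product duality on symmetric functions to convert the coefficient of $p_{2^N}$ in an $m$-basis expression into the coefficient of an $h$-function in an $h$-basis expression for $p_2^N$. This trades a hard change-of-basis for an easy one.

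Recall that under the Hall inner product, $\langle p_{\lambda}, p_{\mu}\rangle = z_{\lambda}\,\delta_{\lambda\mu}$, and the bases $\{m_{\lambda}\}$ and $\{h_{\lambda}\}$ are dual: $\langle m_{\lambda}, h_{\mu}\rangle = \delta_{\lambda\mu}$. If we expand $m_{\lambda} = \sum_{\mu} c_{\mu} p_{\mu}$, pairing with $p_{\nu}$ yields $c_{\nu} = \langle m_{\lambda}, p_{\nu}\rangle/z_{\nu}$. On the other hand, if $p_{2^N} = \sum_{\mu} b_{\mu} h_{\mu}$, pairing with $m_{\lambda}$ gives $b_{\lambda} = \langle m_{\lambda}, p_{2^N}\rangle$. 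Since $z_{2^N} = 2^N N!$, combining the two identities yields
\[
[p_{2^N}]\, m_{\lambda} \;=\; \frac{[h_{\lambda}]\, p_2^N}{2^N N!}.
\]

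Next, by Newton's identity $p_2 = 2h_2 - h_1^2$, so the binomial theorem gives
\[
p_2^N \;=\; \sum_{k=0}^{N}\binom{N}{k}(-1)^{N-k}2^{k}\, h_2^{k}\, h_1^{2N-2k}
\;=\; \sum_{k=0}^{N}\binom{N}{k}(-1)^{N-k}2^{k}\, h_{(2^{k},1^{2N-2k})}.
\]
Hence $[h_{\lambda}]\, p_2^N$ vanishes unless $\lambda$ has the shape $(2^{k},1^{2N-2k})$, in which case it equals $(-1)^{N-k}\binom{N}{k}2^{k}$. Such a partition has length $2N-k$, so among partitions $\lambda \vdash 2N$ with $\ell(\lambda)=N+s$ the only contributor is $\lambda = (2^{N-s},1^{2s})$, obtained by taking $k=N-s$. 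Substituting,
\[
\frac{(-1)^{s}\binom{N}{N-s}\, 2^{N-s}}{2^N N!} \;=\; \frac{(-1)^{s}}{2^{s}\, s!\,(N-s)!},
\]
which is exactly the claim.

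There is no real obstacle here: the one conceptual move is the duality calculation in the first paragraph, after which Newton's identity immediately pins down all relevant coefficients and the sum collapses to a single term. If one preferred a more elementary route, one could instead compute $[p_{2^N}]m_{\lambda}$ by a two-parameter principal specialization $x_1=\cdots=x_a=1$, $x_{a+1}=\cdots=x_{a+b}=u$ that refines Lemma~\ref{lem:coeff-k-cycles}, but the duality argument is shorter and more transparent.
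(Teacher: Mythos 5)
Your proof is correct and runs on the same computational engine as the paper's: the Hall inner product duality $[p_{2^N}]m_\lambda = \langle p_{2^N}, m_\lambda\rangle/z_{2^N} = [h_\lambda]\,p_2^N/(2^N N!)$ combined with $p_2 = 2h_2 - h_1^2$ and the binomial theorem. The one difference is that the paper first invokes triangularity of the $p$-to-$m$ transition matrices with respect to dominance order to cut the sum down to the single term $m_{1^{2s}2^{N-s}}$, whereas your explicit $h$-expansion of $p_2^N$ shows directly that $[h_\lambda]\,p_2^N$ vanishes unless $\lambda=(2^k,1^{2N-2k})$, so the unique contributing partition of length $N+s$ falls out automatically and the dominance-order argument becomes unnecessary --- a modest but genuine streamlining.
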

\begin{proof}
For partitions $\la,\mu$ of $n$, we denote $S_{\la,\mu}=[p_\la(\xx)]m_\mu(\xx)$ and $R_{\la,\mu}=[m_\la(\xx)]p_\mu(\xx)$. The matrices $S=(S_{\la,\mu})_{\la,\mu \vdash n}$ and $R=(R_{\la,\mu})_{\la,\mu \vdash n}$ are the transition matrices between the bases $\{p_{\la}\}_{\la, \vdash n}$ and $\{m_{\la}\}_{\la \vdash n}$ of symmetric functions of degree $n$, hence $S=R^{-1}$. Moreover the matrix $R$ is easily seen to be lower triangular in the \emph{dominance order} of partitions, that is, $R_{\la,\mu}=0$ unless $\la_1+\la_2+\cdots +\la_i \leq \mu_1+\mu_2+\cdots+ \mu_i$ for all $i\geq 1$ (\cite[Prop. 7.5.3]{EC2}). Thus the matrix $S=R^{-1}$ is also lower triangular in the dominance order. 
Since the only partition of $2N$ of length $N+s$ that is not larger than the partition $2^N$ in the dominance order is $1^{2s}2^{N-s}$, one gets
\begin{equation}\label{eq:extract1}
[p_{2^N}(\xx)] \sum_{\la \vdash 2N,~\ell(\la)=N+s} m_{\la}(\xx) = [p_{2^N}(\xx)] \,m_{1^{2s}2^{N-s}}(\xx). 
\end{equation}
To compute this coefficient we use the standard scalar product $\langle\cdot,\cdot\rangle$ on symmetric functions (see e.g. \cite[Sec. 7]{EC2}) defined by $\langle p_{\la},p_{\mu}\rangle = z_{\la}$ if $\la=\mu$ and 0 otherwise, where $z_{\la}$ was defined at the end of Section~\ref{sec:intro}. 
From this definition one immediately gets 
\begin{equation}\label{eq:extract2}
[p_{2^N}] \,m_{1^{2s}2^{N-s}} = \frac{1}{z_{2^N}}\langle p_{2^N}, m_{1^{2s}2^{N-s}}\rangle= \frac{1}{N!2^N}\langle p_{2^N}, m_{1^{2s}2^{N-s}}\rangle.
\end{equation}
Let $\{h_{\la}\}$ denote the basis of the complete symmetric functions. It is well known that $\langle h_{\la},m_{\mu}\rangle=1$ if $\la=\mu$ and 0 otherwise, therefore $\langle p_{2^N}, m_{1^{2s}2^{N-s}}\rangle=[h_{1^{2s}2^{N-s}}]p_{2^N}$. Lastly, since $p_{2^N}=(p_2)^N$ and $p_2= 2h_2-h_1^2$ one gets 
\begin{equation}\label{eq:extract3}
\langle p_{2^N}, m_{1^{2s}2^{N-s}}\rangle=[h_{1^{2s}2^{N-s}}]p_{2^N}= [h_1^{2s}h_2^{N-s}]\, (2h_2-h_1^2)^N=2^{N-s}(-1)^s {N\choose s}.
\end{equation}
Putting together \eqref{eq:extract1}, \eqref{eq:extract2} and \eqref{eq:extract3} completes the proof.
\end{proof}

By Lemma~\ref{lemMaps}, Equation \eqref{eq:fpfinv} becomes
\begin{align*}
H_{N}^{\alpha}(t+k)&= \sum_{r=0}^{2N-m} \binom{t}{r} \binom{2N+k-1}{2N-m-r} \sum_{s=0}^{N-k-r+1} \frac{2N(N-s)!(2N-k-r)!}{(N-k-r-s+1)!} \frac{(-1)^s}{2^s s!(N-s)!}\\
&=2N\sum_{r=0}^{2N-m} \binom{t}{r} \binom{2N+k-1}{2N-m-r}\frac{(2N-k-r)!}{(N-k-r+1)!} \sum_{s=0}^{N-k-r+1} \binom{N-k-r+1}{s}\frac{(-1)^s}{2^s}\\
&=2N\sum_{r=0}^{\min(2N-m,N-k+1)} \binom{t}{r}\binom{2N+k-1}{2N-m-r} \frac{(2N-k-r)!}{(N-k-r+1)!} \frac{1}{2^{N-k-r+1}},
\end{align*}
where the last equality uses the binomial theorem. This completes the proof of Equation \eqref{eq:GF-Mapsexplicit}. Equation \eqref{eq:separation-involution} then immediately follows from the case $t=1-k$ of \eqref{eq:GF-Mapsexplicit} via \eqref{eq:probtoenumsig}.
 This completes the proof of Theorem~\ref{thm:GF-Maps}. \hfill$\square$


\section{Adding fixed points to the permutation $\pi$}\label{sec:fixedpoints}
In this section we obtain a relation between the separation probabilities $\psig_{\la}^\al$ and $\psig_{\la'}^\al$, when the partition $\la'$ is obtained from $\la$ by adding some parts of size 1. Our main result is given below.

\begin{theorem}\label{thm:addfixpt} 
Let $\la$ be a partition of $n$ with parts of size at least $2$ and let $\la'$ be the partition obtained from $\la$ by adding $r$ parts of size 1. Then for any composition $\al=(\al_1,\ldots,\al_k)$ of $m\leq n+r$ of length $k$,
\begin{equation} \label{eq:addfixpoints}
\#\mS^{\alpha}_{\la'} = \sum_{p=0}^{m-k} \left(\frac{n+p}{n} \binom{n+m+r-p}{n+m} + \frac{m-p}{n} \binom{n+m+r-p-1}{n+m} \right)\binom{m-k}{p} \#\mS^{(m-k-p+1,1^{k-1})}_{\la}.
\end{equation}
Equivalently, in terms of separation probabilities,
\begin{equation}
\label{eq:addfixpointsProb}
\sigma^{\alpha}_{\la'} = \frac{n!}{\binom{n+r}{\alpha_1,\ldots,\al_k,n+r-m}\binom{n+r}{r}}\sum_{p=0}^{m-k}  \frac{\left(\frac{n+p}{n} \binom{n+m+r-p}{n+m} + \frac{m-p}{n} \binom{n+m+r-p-1}{n+m} \right) \binom{m-k}{p}}{(n-m+p)!(m-k-p+1)!}\, \sigma^{(m-k-p+1,1^{k-1})}_{\la}.
\end{equation}
\end{theorem}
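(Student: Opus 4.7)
My approach is a bijective decomposition of $(\pi', A) \in \mS_{\la'}^\al$. Given such a pair, let $F \subset [n+r]$ be the set of $r$ fixed points of $\pi'$. Restricting $\pi'$ to $[n+r]\setminus F$ and identifying this set with $[n]$ via the order-preserving bijection yields a permutation $\pi \in \mC_{\la}$. Each $A_i$ splits canonically as $A_i = A_i^* \sqcup A_i^F$ with $A_i^* \subseteq [n+r]\setminus F$ (viewed as a subset of $[n]$) and $A_i^F = A_i \cap F$. Setting $p_i = |A_i^F|$ and $p = \sum_i p_i$, the key structural observation is that the cycles of $\pi' \circ (1, \ldots, n+r)$ are obtained from those of $\pi \circ (1, \ldots, n)$ by inserting each $f \in F$ into the cycle containing the largest non-fixed element preceding $f$ cyclically. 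Hence separation of $(\pi', A)$ translates into the joint condition that, for each cycle $C$ of $\pi \circ (1,\ldots,n)$, the set of indices $i$ with either $A_i^* \cap C \neq \emptyset$ or some $A_i^F$-fixed point whose slot lies in $C$ has cardinality at most one.

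Summing $\#\mS_{\la'}^\al$ over distributions $(p_1, \ldots, p_k)$ with $\sum p_i = p$, the Vandermonde identity $\sum_{(p_i)} \prod_i \binom{\al_i - 1}{p_i} = \binom{m-k}{p}$ collapses the sum over color distributions. Corollary~\ref{cor:sepsym} then replaces $\#\mS_\la^{\al^*}$ (for $\al^* = (\al_i - p_i)_i$ of length $k$ and size $m-p$) by the canonical representative $\#\mS_\la^{(m-k-p+1, 1^{k-1})}$. Edge cases where some $A_i^* = \emptyset$ (i.e.\ $p_i = \al_i$) produce effective compositions of length less than $k$ on the $A^*$ side and must be re-expressed using Corollary~\ref{cor:sepsym}; their contribution should be absorbed into the coefficient $C_p$.

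The main obstacle is establishing the explicit two-term form $C_p = \frac{n+p}{n}\binom{n+m+r-p}{n+m} + \frac{m-p}{n}\binom{n+m+r-p-1}{n+m}$. For a fixed $(\pi, A^*)$ the number of valid extensions $(F, A^F)$ depends on the cycle structure of $\pi \circ (1,\ldots,n)$, so $C_p$ emerges only after summation over $(\pi, A^*) \in \mS_\la^{\al^*}$ together with the accounting of edge cases. A combinatorial route would parametrize the placement of fixed points into cyclic slots between consecutive elements of $[n+r]\setminus F$ and then apply Chu-Vandermonde or hockey-stick identities to simplify the resulting binomial sum. Alternatively, one can proceed algebraically via Theorem~\ref{thm:GF}: since $\la' = \la \cup 1^r$ implies $p_{\la'}(\xx) = p_\la(\xx) p_1(\xx)^r$, the coefficient $[p_{\la'}] \GG_{n+r}^\al(\xx, 1)$ can be extracted from the explicit $m_\ga$-basis expansion of $\GG_{n+r}^\al$ by manipulating the transition matrix between $\{m_\ga\}$ and $\{p_\mu\}$, from which the two-term form of $C_p$ should fall out.
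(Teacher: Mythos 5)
Your setup is sound and matches the paper's starting point: the observation that the cycles of $\pi'\circ(1,\ldots,n+r)$ are obtained from those of $\pi\circ(1,\ldots,n)$ by attaching each fixed point to the non-fixed element cyclically preceding it is exactly Remark~\ref{rk:adding-fixed}, and the final reduction via Corollary~\ref{cor:sepsym} to the representative $(m-k-p+1,1^{k-1})$ is also the paper's last step. But the proposal has a genuine gap at precisely the point you flag yourself: the coefficient $C_p$ is never derived, and the difficulty is not merely a binomial-sum manipulation. As you note, for a fixed $(\pi,A^*)$ the number of admissible placements of the $r$ fixed points (and of the $p$ marked ones among them) depends on the cycle structure of $\pi\circ(1,\ldots,n)$ and on which cycles already meet which $A_j^*$, so the sum does not factor as $C_p\cdot\#\mS_\la^{\al^*}$ without a further idea. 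The paper's resolution is to \emph{transfer the marks of the fixed points onto the preceding non-fixed element}, producing a pair $(\pi,M)$ where $M$ is a tuple of disjoint \emph{multisubsets}; separation of $(\pi',A)$ becomes $M$-separation of $\pi\circ(1,\ldots,n)$, and — crucially — the number of preimages of $(\pi,M)$ under this map depends only on two statistics: the number $s$ of distinct marked elements and the multiplicity $x$ of the element $n$ (Lemma~\ref{lem:transfer}). Your decomposition keeps $A_i^F$ as an unstructured subset of $F$ and therefore never achieves this decoupling.

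Two further ingredients of the paper are absent from your sketch and are where the two-term shape of $C_p$ actually comes from. First, the preimage count splits into the cases $x=0$ and $x>0$ because of the wrap-around block containing $n+r$ (the position of $n+r$ inside its block must be chosen, and whether $n+r$ itself is marked matters); this is the source of the two binomials $\binom{n+m+r-p}{n+m}$ and $\binom{n+m+r-p-1}{n+m}$. Second, summing the preimage counts over $\hmS_\la^{\al,\ga}$ requires knowing $\EE(X)$ and $\PP(X>0)$ for the multiplicity $X$ of the element $n$; the paper gets $\EE(X)=m/n$ and $\PP(X>0)=|\ga|/n$ by a cyclic-symmetry averaging argument (Lemma~\ref{lem:cyclic}), which is what produces the factors $\frac{n+p}{n}$ and $\frac{m-p}{n}$. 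Neither the "insert fixed points into slots and apply Chu--Vandermonde" route nor the algebraic route through the $m_\ga$-to-$p_\mu$ transition matrix is carried out, so as written the proposal establishes only the shape of the identity, not the identity itself.
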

For instance, when $\al=1^k$ Theorem~\ref{thm:addfixpt} gives 
\begin{equation} \nonumber 
\sigma^{1^k}_{\la'} = \frac{\binom{n+r-k}{r}}{\binom{n+r}{r}^2}\left( \binom{n+r+k}{n+k} + \frac{k}{n}\binom{n+r+k-1}{n+k} \right)\sigma^{1^k}_{\la}.\\
\end{equation}


The rest of the section is devoted to proving Theorem~\ref{thm:addfixpt}. Observe first that \eqref{eq:addfixpointsProb} is a simple restatement of \eqref{eq:addfixpoints} via~\eqref{eq:probtoenumsig} (using the fact that $\#\mC_{\la'}={n+r\choose n}\#\mC_{\la}$). 
Thus it only remains to prove \eqref{eq:addfixpoints}, which amounts to enumerating $\mS_{\la'}^\al$. For this purpose, we will first define a mapping $\Psi$ from $\mS_{\la'}^\al$ to $\hmS_\la^\al$, where $\hmS_\la^\al$ is a set closely related to $\mS_{\la}^\al$. We shall then count the number of preimages of each element in $\hmS_\la^\al$ under the mapping $\Psi$. Roughly speaking, if $(\pi',A)$ is in $\mS_{\la'}^\al$ and the tuple $A=(A_1,\ldots,A_k)$ is thought as ``marking'' some elements in the cycles of the permutation $\om=\pi'\circ (1,2,\ldots,n+r)$, then the mapping $\Psi$ simply consists in removing all the fixed points of $\pi'$ from the cycle structure of $\om$ and transferring their ``marks'' to the element preceding them in the cycle structure of $\om$.

We introduce some notation. A \emph{multisubset} of $[n]$ is a function $M$ which associates to each integer $i\in[n]$ its \emph{multiplicity} $M(i)$ which is a nonnegative integer. The integer $i$ is said to be \emph{in} the multisubset $M$ if $M(i)>0$. 
 The \emph{size} of $M$ is the sum of multiplicities $\sum_{i=1}^nM(i)$. For a composition $\al=(\al_1,\ldots,\al_k)$, we denote by $\hat{\mA_n^\al}$ the set of tuples $(M_1,\ldots,M_k)$ of \emph{disjoint} multisubsets of $[n]$ (i.e., no element $i\in[n]$ is in more than one multisubset) such that the multisubset $M_j$ has size $\al_j$ for all $j\in[k]$. For $M=(M_1,\ldots,M_k)$ in $\hmA_n^\al$ we say that a permutation $\pi$ of $[n]$ is $M$-separated if no cycle of $\pi$ contains elements of more than one of the multisubsets $M_j$.
Lastly, for a partition $\la$ of $n$ we denote by $\hmS_\la^\al$ the set of pairs $(\pi,M)$ where $\pi$ is a permutation in $\mC_\la$, and $M$ is a tuple in $\hmA_n^\al$ such that the product $\pi\circ (1,2,\ldots,n)$ is $M$-separated.

We now set $\la,\la',\al,k,m,n,r$ to be as in Theorem~\ref{thm:addfixpt}, and define a mapping $\Psi$ from $\mS_{\la'}^\al$ to $\hmS_\la^\al$. 
Let $\pi'$ be a permutation of $[n+r]$ of cycle type $\la'$, and let $e_1<e_2<\cdots<e_n\in[n+r]$ be the elements not fixed by $\pi'$. We denote $\varphi(\pi')$ the permutation $\pi$ defined by setting $\pi(i)=\pi(j)$ if $\pi'(e_i)=e_j$. Observe that $\pi$ has cycle type $\la$.
\begin{rmk}\label{rk:adding-fixed} 
If $e_1<e_2<\cdots<e_n\in[n+r]$ are the elements not fixed by $\pi'$ and $\pi=\varphi(\pi')$, then the cycle structure of the permutation $\pi'\circ (1,2,\ldots,n+r)$ is obtained from the cycle structure of $\pi\circ (1,2,\ldots,n)$ by replacing each element $i\in[n-1]$ by the sequence of elements $F_i=e_i,e_i+1,e_i+2,\ldots,e_{i+1}-1$, and replacing the element $n$ by the sequence of elements $F_n=e_n,e_n+1,e_n+2,\ldots,n+r,1,2,\ldots,e_{1}-1$. In particular, the permutations $\pi\circ (1,2,\ldots,n)$ and $\pi'\circ (1,2,\ldots,n+r)$ have the same number of cycles.
\end{rmk}
Now given a pair $(\pi',A)$ in $\mS_{\la'}^\al$, where $A=(A_1,\ldots,A_k)$, we consider the pair $\Psi(\pi',A)=(\pi,M)$, where $\pi=\varphi(\pi')$ and $M=(M_1,\ldots,M_k)$ is a tuple of multisubsets of $[n]$ defined as follows: for all $j\in [k]$ and all $i\in[n]$ the multiplicity $M_j(i)$ is the number of elements in the sequence $F_i$ belonging to the subset $A_j$ (where the sequence $F_i$ is defined as in Remark~\ref{rk:adding-fixed}). It is easy to see that $\Psi$ is a mapping from $\mS_{\la'}^\al$ to $\hmS_{\la}^\al$.

We are now going to evaluate $\#\mS_{\la'}^\al$ by counting the number of preimages of each element in $\hmS_{\la}^\al$ under the mapping $\Psi$. 
As we will see now, the number of preimages of a pair $(\pi,M)$ in $\hmS_{\la}^\al$ only depends on $M$.
\begin{lemma}\label{lem:transfer}
Let $(\pi,M)\in\hmS_{\la}^\al$, where $M=(M_1,\ldots,M_k)$. Let $s$ be the number of distinct elements appearing in the multisets $M_1,\ldots,M_k$, and let $x=\sum_{j=1}^kM_j(n)$ be the multiplicity of the integer~$n$. Then the number of preimages of the pair $(\pi,M)$ under the mapping $\Psi$ is 
\begin{equation}\label{eq:transfer}
\#\Psi^{-1}(\pi,M)= 
\left\{\begin{array}{ll}
\displaystyle {n+r+s \choose n+m} & \textrm{ if $x=0$,}\\[10pt]
\displaystyle x\,{n+r+s \choose n+m}+ {n+r+s-1 \choose n+m}&\textrm{ otherwise}. 
\end{array}\right.
\end{equation}
\end{lemma}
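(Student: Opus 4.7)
The plan is to parametrize the preimages $(\pi',A)$ of a fixed pair $(\pi,M)\in\hmS_\la^\al$ by two essentially independent choices and then evaluate the resulting combinatorial sum. Since $\varphi$ determines $\pi'$ uniquely from $\pi$ once the set $E=\{e_1<\cdots<e_n\}\subseteq[n+r]$ of non-fixed points of $\pi'$ is fixed, a preimage amounts to (a) a choice of the $n$-subset $E$ and (b) for each arc $F_i$, a distribution of its $f_i$ elements among the subsets $A_1,\ldots,A_k$ matching the prescribed counts $M_1(i),\ldots,M_k(i)$. A crucial simplification comes from the definition of $\hmA_n^\al$: the multisubsets $M_1,\ldots,M_k$ have pairwise disjoint supports, so for each $i$ at most one $M_j(i)$ is nonzero. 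The multinomial coefficient $\binom{m_i}{M_1(i),\ldots,M_k(i)}$ that would apportion the $m_i$ marked elements of $F_i$ among the $A_j$'s therefore collapses to $1$, and the number of admissible $A$'s, once the arc-length vector $(f_1,\ldots,f_n)$ is fixed, is simply $\prod_{i=1}^n\binom{f_i}{m_i}$.

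I would next group preimages by the composition $(f_1,\ldots,f_n)$ of arc lengths. For a fixed composition of $n+r$ with $f_i\geq 1$, the number of subsets $E$ producing those arc lengths is exactly $f_n$: the convention $e_1=\min E$ reduces the datum of $E$ to the datum of $e_1$, which can independently range over $\{1,2,\ldots,f_n\}$ (these being the positions from which the cyclic arc of length $f_n$ containing the ``wraparound'' between $n+r$ and $1$ can start). Combining the two steps yields
\[
\#\Psi^{-1}(\pi,M) \;=\; \sum_{(f_1,\ldots,f_n)} f_n \prod_{i=1}^n \binom{f_i}{m_i},
\]
the sum ranging over compositions of $n+r$ with $f_i\geq \max(1,m_i)$ for each $i$.

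To evaluate this sum I would pass to single-variable generating functions. For each index $i$, the factor $\binom{f_i}{m_i}$ together with the range constraint contributes $\frac{z^{m_i}}{(1-z)^{m_i+1}}$ when $m_i\geq 1$ and $\frac{z}{1-z}$ when $m_i=0$, while the extra factor $f_n$ is realized by applying $z\frac{d}{dz}$ to the $n$-th factor. Taking the product and extracting the coefficient of $z^{n+r}$ via $[z^k](1-z)^{-N}=\binom{k+N-1}{N-1}$ leads directly to the claimed binomial coefficients. The main bookkeeping hurdle is the asymmetry produced by $f_n$, which forces a case split on whether $n\in\mathrm{supp}(M)$, i.e., on whether $x=m_n$ vanishes: if $x=0$ the derivative acts on $\frac{z}{1-z}$ and yields the single clean term $\binom{n+r+s}{n+m}$, whereas if $x\geq 1$ the derivative on $\frac{z^{m_n}}{(1-z)^{m_n+1}}$ splits into two terms whose coefficient extractions combine, via Pascal's identity, to $x\binom{n+r+s}{n+m}+\binom{n+r+s-1}{n+m}$. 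This case distinction is the only real subtlety; everything else reduces to routine manipulation of powers of $(1-z)$.
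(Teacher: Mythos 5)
Your proposal is correct, and it rests on exactly the same decomposition of the preimages as the paper's proof: a preimage of $(\pi,M)$ is determined by the arc lengths $f_1,\ldots,f_n$ summing to $n+r$, the position of $n+r$ inside the arc $F_n$ (your observation that $e_1$ ranges over $f_n$ values is the paper's choice of a position $b\in[f_n]$), and the placement of the $M_*(i)=\sum_j M_j(i)$ marks inside each arc $F_i$ (and you correctly note that disjointness of the supports of the $M_j$ collapses the would-be multinomial to $\binom{f_i}{M_*(i)}$). The only genuine difference is how the sum $\sum_{(f_1,\ldots,f_n)} f_n\prod_i\binom{f_i}{m_i}$ is then evaluated: you extract the coefficient of $z^{n+r}$ from a product of factors $z^{m_i}/(1-z)^{m_i+1}$ with $z\frac{d}{dz}$ applied to the $n$-th factor, whereas the paper encodes each configuration bijectively as a concatenated row of boxes with marked positions and counts those rows directly. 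The correspondence is tight: the two terms $x\binom{n+r+s-1}{n+m-1}$ and $(x+1)\binom{n+r+s-1}{n+m}$ produced by your product rule in the case $x>0$ are precisely the paper's two subcounts according to whether or not $n+r$ belongs to one of the subsets $A_j$, and they recombine by Pascal's identity in the same way. So this is a difference of bookkeeping rather than of structure; the generating-function route is arguably quicker to verify, while the paper's box encoding has the virtue of being fully bijective, in the spirit of Section~\ref{sec:maps}.
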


\begin{proof}
We adopt the notation of Remark~\ref{rk:adding-fixed}, and for all $i\in[n]$ we denote $M_*(i)=\sum_{j=1}^kM_j(i)$ the multiplicity of the integer $i$. In order to construct a preimage $(\pi',A)$ of $(\pi,M)$, where $A=(A_1,\ldots,A_k)$, one has to 
\begin{compactitem}
\item[(i)] choose for all $i\in[n]$ the length $f_i>0$ of the sequence $F_i$ (with $\sum_{i=1}^nf_i=n+r$), 
\item[(ii)] choose the position $b\in[f_n]$ corresponding to the integer $n+r$ in the sequence $F_n$, 
\item[(iii)] if $M_j(i)>0$ for some $i\in[n]$ and $j\in[k]$, then choose which $M_j(i)$ elements in the sequence $F_i$ are in the subset $A_j$.
\end{compactitem}
Indeed, the choices (i), (ii) determine the permutation $\pi'\in\mC_{\la'}$ (since they determine the fixed-points of $\pi'$, which is enough to recover $\pi'$ from $\pi$), while by Remark~\ref{rk:adding-fixed} the choice (iii) determines the tuple of subsets $A=(A_1,\ldots,A_k)$. 

We will now count the number ways of making the choices (i), (ii), (iii) by encoding such choices as rows of (marked and unmarked) boxes as illustrated in Figure~\ref{fig:adding-fixed-bij}. We treat separately the cases $x=0$ and $x\neq 0$. Suppose first $x=0$. To each $i\in[n]$ we associate a row of boxes $R_i$ encoding the choices (i), (ii), (iii) as follows:
\begin{compactitem}
\item[(1)] if $i\neq n$ and $M_*(i)=0$, then the row $R_i$ is made of $f_i$ boxes, the first of which is marked,
\item[(2)] if $i\neq n$ and $M_*(i)>0$, then the row $R_i$ is made of $f_i+1$ boxes, with the first box being marked and $M_*(i)$ other boxes being marked (the marks represent the choice (iii)),
\item[(3)] the row $R_n$ is made of $f_n+1$ boxes, with the first box being marked and an additional box being marked and called \emph{special marked box} (this box represents the choice (ii)).
\end{compactitem}
There is no loss of information in concatenating the rows $R_1,R_2,\ldots,R_n$ given that $M$ is known (indeed the row $R_i$ starts at the $(i+N_i)$th marked box, where $N_i=\sum_{h<i}M_*(h)$\,). This concatenation results in a row of $n+r+s+1$ boxes with $n+m+1$ marks such that the first box is marked and the last mark is ``special''; see Figure~\ref{fig:adding-fixed-bij}. Moreover there are ${n+r+s \choose n+m}$ such rows of boxes and any of them can be obtained for some choices of (i), (ii), (iii). This proves the case $x=0$ of Lemma~\ref{lem:transfer}.\\

\fig{width=\linewidth}{adding-fixed-bij}{Example of choices (1),(2),(3) encoded by a sequence of boxes, some of which being marked (indicated in gray), with one mark being special (indicated with a cross). Here $n=6$, $k=2$, $r=11$, $x=0$ and the multisubsets $M_1,M_2$ are defined by $M_1(1)=1$, $M_2(3)=1$, $M_1(4)=3$, and $M_j(i)=0$ for the other values of $i,j$.}

We now suppose $x>0$. We reason similarly as above but there are now two possibilities for the row $R_n$, depending on whether or not the integer $n+r$ belongs to one of the subsets $A_1,\ldots,A_k$. In order to encode a preimage such that $n+r$ belong to one of the subsets $A_1,\ldots,A_k$ the condition $(3)$ above must be changed to 
\begin{compactitem}
\item[(3')] the row $R_n$ is made of $f_n+1$ boxes, with the first box being marked and $x$ other boxes being marked, one of which being called \emph{special marked box}.
\end{compactitem}
In this case, concatenating the rows $R_1,R_2,\ldots,R_n$ gives a row of $n+r+s$ boxes with $n+m$ marks, with the first box being marked and one of the $x$ last marked boxes being special. There are $x {n+r+s-1 \choose n+m-1}$ such rows and each of them comes from a unique choice of (i), (ii) and (iii). 

Lastly, in order to encode a preimage such that $n+r$ does not belong to one of the subsets $A_1,\ldots,A_k$ the condition $(3)$ above must be changed to 
\begin{compactitem}
\item[(3'')] the row $R_n$ is made of $f_n+1$ boxes, with the first box being marked and $x+1$ other boxes being marked, one of which being called \emph{special marked box}.
\end{compactitem}
In this case, concatenating the rows $R_1,R_2,\ldots,R_n$ gives a row of $n+r+s$ boxes with $n+m+1$ marks, with the first box being marked and one of the $x+1$ last marked boxes being special. There are $(x+1) {n+r+s-1 \choose n+m}$ such rows and each of them comes from a unique choice of (i), (ii) and (iii). 

Thus, in the case $x>0$ one has
$$\#\Psi^{-1}(\pi,M)=x {n+r+s-1 \choose n+m-1}+(x+1) {n+r+s-1 \choose n+m}=x\,{n+r+s \choose n+m}+ {n+r+s-1 \choose n+m}.$$
This completes the proof of Lemma \ref{lem:transfer}.
\end{proof}

We now complete the proof of Theorem~\ref{thm:addfixpt}. For any composition $\ga=(\ga_1,\ldots,\ga_k)$, we denote by $\hmS_\la^{\al,\ga}$ the set of pairs $(\pi,M)$ in $\hmS_\la^\al$, where the tuple $M=(M_1,\ldots,M_k)$ is such that for all $j\in[k]$ the multisubset $M_j$ (which is of size $\al_j$) contains exactly $\ga_j$ distinct elements. 
Summing~\eqref{eq:transfer} gives
\begin{equation}\label{eq:sum-transfer}
\sum_{(\pi,M)\in\hmS_\la^{\al,\ga}}\!\!\!\#\Psi^{-1}(\pi,M)=\left((\EE(X)+\PP(X=0)){n+r+|\ga| \choose n+m}+\PP(X>0){n+r+|\ga|-1 \choose n+m}\right)\,\#\hmS_\la^{\al,\ga},
\end{equation}
where $X$ is the random variable defined as $X=\sum_{j=1}^kM_j(n)$ for a pair $(\pi,M)$ chosen uniformly randomly in $\hmS_\la^{\al,\ga}$, $\EE(X)$ is the expectation of this random variable, and $\PP(X>0)=1-\PP(X=0)$ is the probability that $X$ is positive. 
\begin{lemma}\label{lem:cyclic}
With the above notation, $\displaystyle \EE(X)=\frac{m}{n}$, and $\displaystyle \PP(X>0)=\frac{|\ga|}{n}$.
\end{lemma}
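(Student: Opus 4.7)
The plan is to exploit a cyclic $\mathbb{Z}/n$ symmetry of $\hmS_\la^{\al,\ga}$ induced by conjugation by the $n$-cycle $c=(1,2,\ldots,n)$.

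First I would define a map $\Phi:\hmS_\la^{\al,\ga}\to\hmS_\la^{\al,\ga}$ by $\Phi(\pi,M)=(c^{-1}\pi c,\,c^{-1}(M))$, where for $M=(M_1,\ldots,M_k)$ we set $c^{-1}(M)=(c^{-1}(M_1),\ldots,c^{-1}(M_k))$ and $c^{-1}(M_j)$ is the multisubset assigning multiplicity $M_j(c(i))$ to each $i\in[n]$. I would then check that $\Phi$ is a well-defined bijection on $\hmS_\la^{\al,\ga}$: conjugation preserves cycle type so $c^{-1}\pi c\in\mC_\la$; relabelling by the bijection $c^{-1}$ preserves disjointness, the sizes $\al_j$, and the numbers $\ga_j$ of distinct elements, so $c^{-1}(M)\in\hmA_n^\al$ with distinct-element profile $\ga$; and the key identity
\[
(c^{-1}\pi c)\circ c \;=\; c^{-1}(\pi\circ c)\,c
\]
shows that the cycles of $(c^{-1}\pi c)\circ c$ are the images under $c^{-1}$ of the cycles of $\pi\circ c$, so $M$-separatedness transfers to $c^{-1}(M)$-separatedness. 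The inverse of $\Phi$ uses $c$ in place of $c^{-1}$.

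Next I would note that $\Phi^\ell(\pi,M)=(c^{-\ell}\pi c^\ell,\,c^{-\ell}(M))$, and under iteration the multiplicity at position $i$ is sent to the multiplicity at position $c^\ell(i)$. In particular, as $\ell$ ranges over $\{0,1,\ldots,n-1\}$, the position $n$ sweeps through all of $[n]$. Since $\Phi$ is a bijection, if $(\pi,M)$ is chosen uniformly in $\hmS_\la^{\al,\ga}$ then for each fixed $j\in[k]$ the random variable $M_j(i)$ has the same distribution for every $i\in[n]$.

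From this interchangeability I would deduce both claims directly. For the expectation,
\[
n\,\EE[M_j(n)]=\sum_{i=1}^n\EE[M_j(i)]=\EE\!\left[\sum_{i=1}^n M_j(i)\right]=\al_j,
\]
so summing over $j$ yields $\EE[X]=\sum_{j=1}^k \al_j/n = m/n$. For the probability,
\[
n\,\PP[M_j(n)>0]=\sum_{i=1}^n\PP[M_j(i)>0]=\EE\bigl[\#\{i:M_j(i)>0\}\bigr]=\ga_j.
\]
Since the multisubsets $M_1,\ldots,M_k$ are pairwise disjoint, the events $\{M_j(n)>0\}$ for $j\in[k]$ are disjoint, so
\[
\PP(X>0)=\sum_{j=1}^k\PP[M_j(n)>0]=\frac{|\ga|}{n}.
\]

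There is no substantial obstacle: the entire content of the lemma is the observation that conjugation by $c$ provides a symmetry moving elements cyclically around $[n]$, making every position statistically interchangeable. The only step requiring care is verifying that $\Phi$ really preserves the $M$-separatedness condition, and that reduces to the single identity $(c^{-1}\pi c)\circ c=c^{-1}(\pi\circ c)c$.
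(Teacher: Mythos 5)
Your proof is correct and follows essentially the same route as the paper: a cyclic shift of the labels $1,\ldots,n$ (which you make explicit as conjugation by $c=(1,2,\ldots,n)$, using that $c$ commutes with itself so the defining product is relabelled rather than changed) shows the positions are exchangeable, and then linearity of expectation gives both identities. The only cosmetic difference is that you compute $\EE[M_j(n)]$ and $\PP[M_j(n)>0]$ separately for each $j$ and sum using disjointness of the $M_j$, whereas the paper works directly with $X_i=\sum_j M_j(i)$ and the deterministic identities $\sum_i X_i=m$ and $\sum_i 1_{X_i>0}=|\ga|$.
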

\begin{proof}
The proof is simply based on a cyclic symmetry. For $i\in[n]$ we consider the random variable $X_i=\sum_{j=1}^kM_j(i)$ for a pair $(\pi,M)$ chosen uniformly randomly in $\hmS_\la^{\al,\ga}$. It is easy to see that all the variables $X_1,\ldots,X_n=X$ are identically distributed since the set $\hmS_\la^{\al,\ga}$ is unchanged by cyclically shifting the value of the integers $1,2,\ldots,n$ in pairs $(\pi,M)\in\hmS_\la^{\al,\ga}$. Therefore, 
$$n\,\EE(X)=\sum_{i=1}^n\EE(X_i)=\EE\left(\sum_{i=1}^nX_i\right)=\EE(m)=m,$$
and 
$$n\,\PP(X>0)=\sum_{i=1}^n\PP(X_i>0)=\EE\left(\sum_{i=1}^n1_{X_i>0}\right)=\EE\left(|\ga|\right)=|\ga|.$$
\end{proof}

We now enumerate the set $\hmS_\la^{\al,\ga}$.
Observe that any pair $(\pi,M)$ in $\hmS_\la^{\al,\ga}$ can be obtained (in a unique way) from a pair $(\pi,A)$ in $\mS_\la^{\ga}$ by transforming  $A=(A_1,\ldots,A_k)$ into  $M=(M_1,\ldots,M_k)$ as follows: for each $j\in[k]$ one has to assign a positive multiplicity $M_j(i)$ for all $i\in A_j$ so as to get a multisubset $M_j$ of size $\al_j$. There are $\binom{\al_j-1}{\ga_j-1}$ ways of performing the latter task, hence 
$$\#\hmS_\la^{\al,\ga}=\prod_{i=1}^k \binom{\al_i-1}{\ga_i-1}\,\#\mS_\la^\ga.$$
Using this result and Lemma~\ref{lem:cyclic} in \eqref{eq:sum-transfer} gives
\begin{equation}\label{eq:sum-transfer2}\nonumber
\sum_{(\pi,M)\in\hmS_\la^{\al,\ga}}\#\Psi^{-1}(\pi,M)=\left(\frac{m+n-|\ga|}{n}{n+r+|\ga| \choose n+m}+\frac{|\ga|}{n}{n+r+|\ga|-1 \choose n+m}\right)\prod_{i=1}^k \binom{\al_i-1}{\ga_i-1}\,\#\mS_\la^\ga.
\end{equation}
Observe that the above expression is 0 unless $\ga$ is less or equal to $\al$ componentwise. Finally, one gets
\begin{equation}\label{eq:fixedpointseq2}
\#\mS^{\alpha}_{\la'} = \sum_{\gamma\leq\al,~ \ell(\gamma)=k} \left(\frac{m+n-|\ga|}{n}{n+r+|\ga| \choose n+m}+\frac{|\ga|}{n}{n+r+|\ga|-1 \choose n+m}\right)\prod_{i=1}^k \binom{\al_i-1}{\ga_i-1}\,\#\mS_\la^\ga,
\end{equation}
where the sum is over compositions $\ga$ with $k$ parts, which are less or equal to $\al$ componentwise. Lastly, by Corollary~\ref{cor:sepsym}, the cardinality $\#\mS^{\gamma}_{\la'}$ only depends on the composition $\al$ through the length and size of $\al$. Therefore, one can use \eqref{eq:fixedpointseq2} with $\al=(m-k+1,1^{k-1})$, in which case the compositions $\ga$ appearing in the sum are  of the form  $\ga=(m-k-p+1,1^{k-1})$ for some $p\leq m-k$. This gives \eqref{eq:addfixpoints} and completes the proof of Theorem~\ref{thm:addfixpt}.\hfill$\square$



\section{Bijective proofs and interpretation in terms of maps}\label{sec:maps}
In this section we explain how certain results of this paper can be interpreted in terms of \emph{maps}, and can be proved bijectively. In particular, we shall interpret the sets $\mT_{\ga,\de}^\al$ of ``separated colored factorizations'' (defined in Section~\ref{sec:strategy}) in terms of maps. We can then extend a bijection from \cite{OB:Harer-Zagier-non-orientable} in order to prove bijectively the symmetry property stated in Corollary~\ref{cor:sepsym}.

\subsection{Interpretations of (separated) colored factorizations in terms of maps}
We first recall some definitions about maps. Our \emph{graphs} are undirected, and they can have multiple edges and loops.  Our \emph{surfaces} are two-dimensional, compact, boundaryless, orientable, and considered up to homeomorphism; such a surface is characterized by its genus. A connected graph is \emph{cellularly embedded} in a surface if its edges are not crossing and its \emph{faces} (connected components of the complement of the graph) are simply connected. A \emph{map} is a cellular embedding of a connected graph in an orientable surface considered up to homeomorphism. A map is represented in Figure \ref{fig:one-face-maps}. 
By cutting an edge in its midpoint one gets two \emph{half-edges}.  A map is \emph{rooted} if one of its half-edges is distinguished as the \emph{root}. In what follows we shall consider rooted bipartite maps, and consider the unique proper coloring of the vertices in black and white such that the root half-edge is incident to a black vertex.

\fig{width=.6\linewidth}{one-face-maps}{(a) A rooted bipartite one-face map. (b) A rooted bipartite tree-rooted map (the spanning tree is indicated by thick lines). The root half-edge is indicated by an arrow.}

By a classical encoding (see e.g. \cite{LZ}), for any partitions $\la,\mu$ of $n$, the solutions $(\pi_1,\pi_2)\in\mC_\la\times\mC_\mu$ of the equation $\pi_1\circ\pi_2=(1,2,\ldots,n)$ are in bijection with the rooted one-face bipartite maps such that black and white vertices have degrees given by the permutations $\la$ and $\mu$ respectively.  That is, the number of black (resp. white) vertices of degree $i$ is equal to the number of parts of the partition $\lambda$ (resp. $\mu$) equal to $i$. Let  $\ga=(\ga_1,\ldots,\ga_\ell)$, $\de=(\de_1,\ldots,\de_{\ell'})$ be compositions of $n$ and let $\al=(\al_1,\ldots,\al_k)$ be a composition of $m\leq n$. A rooted bipartite map is $(\ga,\de)$-colored if  its black vertices  are colored in $[\ell]$ (that is, every vertex is assigned a ``color'' in $[\ell]$) in such a way that $\ga_i$ edges are incident to black vertices of color $i$, and its white vertices are colored in $[\ell']$ in such a way that $\de_i$ edges are incident to white vertices of color $i$. Through the above mentioned encoding, the set $\mB_{\ga,\de}$ of colored factorizations of the $n$-cycles defined in Section~\ref{sec:strategy} corresponds to the set  of $(\ga,\de)$-colored rooted bipartite one-face maps. Similarly, the sets $\mT_{\ga,\de}^\al$ of ``separated colored factorizations'' corresponds to the set of  $(\ga,\de)$-colored rooted bipartite one-face maps with some marked edges, such that for all $i\in[k]$ exactly $\al_i$ marked edges are incident to white vertices colored $i$. 

The results in this paper can then be interpreted in terms of maps.
For instance, one can interpret \eqref{eq:GF-explicit} in the case  $m=k=0$ (no marked edges) as follows:  
$$\sum_{\la\vdash n}\sum_{M\in\mB_\la} \!p_\la(\xx)\,t^{\#\textrm{white vertices}}=\GG_n^\emptyset(\xx,t) =\sum_{r=1}^{n}\sum_{\la \vdash n,~ \ell(\la)\leq n-r+1} \! \!m_{\la}(\xx)\binom{t}{r} \frac{n(n-\ell(\la))!(n-r)!}{(n-r-\ell(\la)+1)!} \binom{n-1}{n-r},$$
where $\mB_\la$ is the set of rooted bipartite one-face maps such that black vertices  have degrees given by the partition $\la$. 
The results in Subsection \ref{subsec:involution} can also be interpreted in terms of \emph{general} (i.e., non-necessarily bipartite) maps. Indeed, the set $\mM_N=\mB_{2^N}$ can be interpreted as the set of general rooted one-face maps with $N$ edges (because a bipartite map in which every black vertex has degree two can be interpreted as a general map upon contracting the black vertices).
Therefore one can interpret \eqref{eq:GF-Mapsexplicit} in the case  $m=k=0$ (no marked edges) as follows:  
\begin{equation}\label{eq:HZ}
\sum_{M\in \mM_N}t^{\#\textrm{vertices}}=H_{N}^{\emptyset}(t) = N\sum_{r=1}^{N+1} \binom{t}{r} 2^{r-N} \frac{(2N-r)!}{(N-r+1)!}\binom{2N-1}{2N-r}.
\end{equation}
This equation is exactly the celebrated Harer-Zagier formula \cite{HZ}.

\subsection{Bijection for separated colored factorizations, and symmetry}
In this section, we explain how some of our proofs could be made bijective. In particular we will use bijective results obtained in \cite{OB:Harer-Zagier-non-orientable} in order to prove the symmetry result stated in Corollary~\ref{cor:sepsym}.\\

We first recall the bijection obtained in \cite{OB:Harer-Zagier-non-orientable} about the sets $\mB_{\ga,\de}$.
We define a \emph{tree-rooted map} to be a rooted map with a marked spanning tree; see Figure~\ref{fig:one-face-maps}(b). 
We say that a bipartite tree-rooted map is $(\ell,\ell')$\emph{-labelled} if it has $\ell$ black vertices labelled with distinct labels in $[\ell]$, and $\ell'$ white vertices labelled with distinct labels in $[\ell']$. It was shown in \cite{OB:Harer-Zagier-non-orientable} that for any compositions $\ga=(\ga_1,\ldots,\ga_\ell)$, $\de=(\de_1,\ldots,\de_{\ell'})$ of $n$, the set $\mB_{\ga,\de}$ 
is in bijection with the set of $(\ell,\ell')$-labelled bipartite tree-rooted maps such that the black (resp. white) vertex labelled $i$ has degree $\ga_i$ (resp. $\de_i$). 

From this bijection, it is not too hard to derive the enumerative formula \eqref{eq:colored-factorizations} (see Remark~\ref{rmk:symmetry}).
We now adapt the bijection established in \cite{OB:Harer-Zagier-non-orientable} to the sets $\mT_{\ga,\de}^\al$ of ``separated colored factorizations''.
For a composition $\al=(\al_1,\ldots,\al_k)$, a $(\ell,\ell')$-labelled bipartite maps is said to be \emph{$\al$-marked} if $\al_i$ edges incident to the white vertex labelled $i$ are marked for all $i$ in $[k]$.
\begin{theorem}\label{thm:bij}
The bijection in \cite{OB:Harer-Zagier-non-orientable} extends into a bijection between the set $\mT_{\ga,\de}^\al$ and the set of $\al$-marked $(\ell,\ell')$-labelled bipartite tree-rooted maps with $n$ edges such that the black (resp. white) vertex labelled $i$ has degree $\ga_i$ (resp. $\de_i$).
\end{theorem}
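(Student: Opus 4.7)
The plan is to reduce the statement to the existing bijection of \cite{OB:Harer-Zagier-non-orientable} by observing that the marking data is a purely local decoration of edges at white vertices, which should transport through the bijection for free.

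First I would translate $\mT^\al_{\ga,\de}$ into the language of maps. Under the standard encoding of solutions $(\pi_1,\pi_2)$ of $\pi_1\circ\pi_2=(1,2,\ldots,n)$ by rooted bipartite one-face maps with $n$ edges, the element $i\in[n]$ corresponds to an edge of the map; the cycles of $\pi_1$ correspond to the black vertices and the cycles of $\pi_2=\pi_1^{-1}\circ(1,\ldots,n)$ correspond to the white vertices, each cycle becoming a vertex whose incident edges are exactly the elements of that cycle. A cycle coloring $c_1$ with $\ga_i$ elements colored $i$ is therefore precisely a labeling of the black vertices by $[\ell]$ such that the black vertex labeled $i$ has degree $\ga_i$, and similarly for $c_2$ on the white side. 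Specifying a tuple $A=(A_1,\dots,A_k)$ of disjoint subsets with $\#A_i=\al_i$ and $A_i\subseteq c_2^{-1}(i)$ is exactly the data of choosing $\al_i$ marked edges incident to the white vertex labeled $i$, for each $i\in[k]$. Thus $\mT^\al_{\ga,\de}$ is tautologically identified with the set of $\al$-marked $(\ga,\de)$-colored bipartite one-face maps with $n$ edges.

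Next I would invoke the bijection $\Phi$ of \cite{OB:Harer-Zagier-non-orientable} between $\mB_{\ga,\de}$ and $(\ell,\ell')$-labelled bipartite tree-rooted maps with matching degree sequences. The key observation needed is that $\Phi$ preserves the edge set in the following precise sense: for every input $M$ there is a canonical bijection between the edges of $M$ and those of $\Phi(M)$, and this bijection respects the incidence of edges to the (labeled) white vertices. Granting this, the extension is immediate: transport the $\al_i$ marks at the white vertex labeled $i$ along the edge identification, and use the inverse of $\Phi$ together with reading off $A_i$ from the marked edges to define the inverse map. Since the number of marks at each labeled white vertex is preserved, the image lies in the claimed set of $\al$-marked $(\ell,\ell')$-labelled tree-rooted maps, and the maps between the two sets are mutually inverse because $\Phi$ and its inverse are.

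The main obstacle is verifying the edge-preservation property of $\Phi$. This requires re-examining the construction of \cite{OB:Harer-Zagier-non-orientable}, which builds the tree-rooted map from the one-face map by a sequence of local surgeries (slicing and reglueing along carefully chosen edges encountered during a traversal of the unique face). Each such surgery leaves the underlying set of edges intact and only modifies the cyclic order of edges around vertices and the genus of the host surface; in particular, no edge is created or destroyed and the set of edges incident to any given white vertex is unchanged. Once this is verified step by step, the theorem follows without any further calculation, and in particular yields a bijective proof of Corollary~\ref{cor:sepsym} via the observation that the cardinality of the image set depends on $\al$ only through its size and length (since the marks at a white vertex of prescribed degree may be distributed in $\prod_i \binom{\de_i}{\al_i}$ ways independently of the rest of the map).
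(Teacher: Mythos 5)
Your argument is correct and matches the paper's: the paper states Theorem~\ref{thm:bij} without further proof, relying on exactly the observation you make, namely that the marks are local decorations on edges incident to labelled white vertices and that the bijection of the cited work preserves the edge set and its incidences to those vertices, so the marking data transports through for free. One caveat on your closing aside: for a fixed $\de$ the count $\prod_i\binom{\de_i}{\al_i}$ does depend on $\al$ beyond its size and length, so Corollary~\ref{cor:sepsym} does not follow from that remark alone --- the paper instead deduces it from Theorem~\ref{thm:bij} via an explicit bijection $\varphi_{i,j}$ sliding marked half-edges between white vertices along the spanning tree (alternatively one must sum over $\de$ and invoke Lemma~\ref{lem:nb-markings}).
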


We will now show that the bijection given by Theorem \ref{thm:bij} easily implies 
\begin{equation}\label{eq:sym-sep-colored}
\#\mT_\ga^\al(r)=\#\mT_\ga^\be(r),
\end{equation}
whenever the compositions $\al$ and $\be$ have the same length and size. Observe that, in turn, \eqref{eq:sym-sep-colored} readily implies Corollary~\ref{cor:sepsym}.

By Theorem~\ref{thm:bij}, the set $\mT_\ga^\al(r)$ specified by Definition~\ref{def:setT} is in bijection with the set $\mTR_\ga^\al(r)$ of $\al$-marked $(\ell,k+r)$-labelled bipartite tree-rooted maps with $n$ edges such that the black vertex labelled $i$ has degree $\ga_i$. We will now describe a bijection between the sets $\mTR_\ga^\al(r)$ and $\mTR_\ga^\be(r)$ when $\al$ and $\be$ have the same length and size. For this purpose it is convenient to interpret maps as graphs endowed with a rotation system.   A \emph{rotation system} of a graph $G$ is an assignment for each vertex $v$ of $G$ of a cyclic ordering of the half-edges incident to $v$. Any map $M$ defines a rotation system $\rho(M)$ of the underlying graph: the cyclic orderings are given by the clockwise order of the half-edges around the vertices. This correspondence is in fact bijective (see e.g. \cite{MT}): for any connected graph $G$ the mapping $\rho$ gives a bijection between maps having underlying graph $G$ and the rotation systems of $G$. Using the ``rotation system'' interpretation, any map can be represented  in the plane (with edges allowed to cross each other) by choosing the clockwise order of the half-edges around each vertex to represent the rotation system; this is the convention used in Figures \ref{fig:marked-tree-rooted} and \ref{fig:easy-tree-rooted}.

\fig{width=.8\linewidth}{marked-tree-rooted}{Left: a $(3,1,1)$-marked $(4,5)$-labelled bipartite tree-rooted map. Right: the $(2,1,2)$-marked $(4,5)$-labelled bipartite tree-rooted map obtained by applying the mapping $\varphi_{1,3}$. In this figure, maps are represented using the ``rotation system interpretation'', so that the edge-crossings are irrelevant. The spanning trees are drawn in thick lines, the marked edges are indicated by stars, and the root half-edge is indicated by an arrow.}

We now prove \eqref{eq:sym-sep-colored} it is  sufficient to establish a bijection between the sets $\mTR_\ga^\al(r)$ and $\mTR_\ga^\be(r)$ in the case $\al=(\al_1,\ldots,\al_k)$, $\be=(\be_1,\ldots,\be_k)$ with $\be_i=\al_i-1$, $\be_j=\al_j+1$ and $\al_s=\be_s$ for $s\neq i,j$. Let $M$ be an $\al$-marked $(\ell,\ell')$-labelled bipartite tree-rooted map. We consider the path joining the white vertices $i$ and $j$ in the spanning tree of $M$. Let $e_i$ and $e_j$ be the edges of this path incident to the white vertices $i$ and $j$ respectively; see Figure~\ref{fig:marked-tree-rooted}. We consider the first marked edge $e_i'$ following $e_i$ in clockwise order around the vertex $i$ (note that $e_i\neq e_i'$ since $\al_i=\be_i+1>1$). We then define $\varphi_{i,j}(M)$ as the map obtained by ungluing from the vertex $i$ the half-edge of $e_i'$ as well as all the half-edges appearing strictly between $e_i$ and $e_i'$, and gluing them (in the same clockwise order) in the corner following $e_j$ clockwise around the vertex $j$. Figure~\ref{fig:marked-tree-rooted} illustrates the mapping $\varphi_{1,3}$. It is easy to see that $\varphi_{i,j}(M)$ is a tree-rooted map, and that $\varphi_{i,j}$ and $\varphi_{j,i}$ are reverse mappings. Therefore $\varphi_{i,j}(M)$ is a bijection between $\mTR_\ga^\al(r)$ and $\mTR_\ga^\be(r)$. This proves \eqref{eq:sym-sep-colored}.

\begin{rmk} \label{rmk:symmetry} 
By an argument similar to the one used above to prove \eqref{eq:sym-sep-colored}, one can prove that if $\ga,\ga',\de,\de'$ are compositions of $n$ such that $\ell(\ga)=\ell(\ga')$ and $\ell(\de)=\ell(\de')$ then $\mB_{\ga,\de}=\mB_{\ga',\de'}$ (this is actually done in a more general setting in \cite{BM1}). From this property one can compute the cardinality of $\mB_{\ga,\de}$ by choosing the most convenient compositions $\ga$, $\de$ of length $\ell$ and $\ell'$. We take $\ga=(n-\ell+1,1,1,\ldots,1)$ and $\de=(n-\ell'+1,1,1,\ldots,1)$, so that $\#\mB_{\ga,\de}$ is the number of $(\ell,\ell')$-labelled bipartite tree-rooted maps with the black and white vertices labelled 1 of degrees $n-\ell+1$ and $n-\ell'+1$ respectively, and all the other vertices of degree 1. In order to construct such an object (see Figure~\ref{fig:easy-tree-rooted}), one must choose the unrooted plane tree (1 choice), the labelling of the vertices ($(\ell-1)!(\ell'-1)!$ choices), the $n-\ell-\ell'+1$ edges not in the tree (${n-\ell \choose n-\ell-\ell'+1}{n-\ell' \choose n-\ell-\ell'+1}(n-\ell'-\ell'+1)!$ choices), and lastly the root ($n$ choices). This gives \eqref{eq:colored-factorizations}.
\end{rmk}

\fig{width=.3\linewidth}{easy-tree-rooted}{A tree-rooted map in $\mB_{\ga,\de}$, where $\ga=(8,1,1,1,1)$, $\de=(9,1,1,1)$. Here the map is represented using the ``rotation system interpretation'', so that the edge-crossings are irrelevant.}

\subsection{A direct proof of Theorem~\ref{thm:GF-Maps}}
In Section \ref{sec:results} we obtained Theorem~\ref{thm:GF-Maps} as a consequence of Theorem~\ref{thm:GF}. Here we explain how to obtain it directly.

First of all, by a reasoning identical to the one used to derive~\eqref{eq:colsepmap} one gets 
\begin{equation}\label{eq:proof-GF-Maps}
H_{N}^{\alpha}(t+k) = \sum_{r=0}^{2N-m} \binom{t}{r}\,\# \mathcal{U}^{\alpha}(r),
\end{equation}
where $\mathcal{U}^{\alpha}(r)$ is the set of triples $(\pi,A,c_2)$ where $\pi$ is a fixed-point free involution of $[2N]$, $A$ is in $\mA^{\alpha}_n$ and $c_2$ is a a cycle coloring of the product $\pi \circ (1,2,\ldots,2N)$ in $[k+r]$ such that every color in $[k+r]$ is used and for all $i$ in $[k]$ the elements in the subset $A_i$ are colored $i$. 

In order to enumerate $\mathcal{U}^{\alpha}(r)$ one considers for each composition $\ga=(\ga_1,\ldots,\ga_\ell)$ the set $\mathcal{M}_{\gamma}$ of pairs $(\pi,c_2)$, where $\pi$ is a fixed-point-free involution of $[2N]$ and $c_2$ is a cycle coloring of the permutation $\pi \circ (1,2,\ldots,2N)$ such that $\gamma_i$ elements are colored $i$ for all $i\in [\ell]$. 
One then uses the following analogue of~\eqref{eq:colored-factorizations}: 
\begin{equation}\label{eq:HZrefined}
\#\mathcal{M}_{\gamma} = \frac{N(2N-\ell)!}{(N-\ell+1)!} 2^{\ell-N}.
\end{equation}
Using this result in conjunction with Lemma~\ref{lem:nb-markings}, one then obtains the following analogue of~\eqref{eq:cardT}:
\begin{equation}\nonumber 
\#\mathcal{U}^{\alpha}(r) = \frac{N(2N-k-r)!}{(N-k-r+1)!}\binom{2N+k-1}{2N-m-r}.
\end{equation}
Plugging this result in \eqref{eq:proof-GF-Maps} completes the proof of Theorem \ref{thm:GF-Maps}.\\

Similarly as \eqref{eq:colored-factorizations}, Equation \eqref{eq:HZrefined} can be obtained bijectively. Indeed by a classical encoding, the set $\mathcal{M}_{\gamma}$ is in bijection with the set of rooted one-face maps with vertices colored  in $[\ell]$ in such a way that for all $i\in[\ell]$, there are exactly $\ga_i$ half-edges incident to vertices of color $i$. Using this interpretation, it was proved in~\cite{OB:Harer-Zagier-non-orientable} that the set $\mathcal{M}_{\ga}$  is in bijection with the set of tree-rooted maps with $\ell$ vertices labelled with distinct labels in $[\ell]$ such that the vertex labelled $i$ has degree $\gamma_i$. The latter set is easy to enumerate (using symmetry as in Remark \ref{rmk:symmetry}) and one gets \eqref{eq:HZrefined}.

\section{Concluding remarks: strong separation and connection coefficients}\label{sec:conclusion}
Given a tuple $A=(A_1,\ldots,A_k)$ of disjoint subsets of $[n]$, a permutation $\pi$ is said to be \emph{strongly $A$-separated} if  each of the subsets $A_i$, for $i\in[k]$ is included in a distinct cycle of $\pi$. Given a partition $\la$ of $n$ and a composition $\al$ of $m\leq n$, we denote by $\ppi_{\la}^\al$ the probability that the product $\om\circ \rho$ is \emph{strongly} $A$-separated, where $\om$ (resp. $\rho$) is a uniformly random permutation of cycle type $\la$ (resp. $(n)$) and $A$ is a fixed tuple in $\mA_n^\al$. In particular, for a composition $\al$ of size $m=n$, one gets 
$$\ppi_{\la}^\al=\frac{K_{\la,(n)}^\al\prod_{i=1}^k (\alpha_i-1)!}{(n-1)!\,\,\#\mC_\la},$$ 
where $K_{\la,(n)}^\al$ is the  \emph{connection coefficient of the symmetric group} counting the number of solutions $(\om,\rho)\in \mC_\la\times \mC_{(n)}$, of the equation $\om\circ \rho=\phi$ where $\phi$ is a fixed permutation of cycle type $\al$. 

We now argue that the separation probabilities $\{\psig_{\la}^\al\}_{\al \models m}$ computed in this paper are enough to determine the probabilities $\{\ppi_{\la}^\al\}_{\al \models m}$. Indeed, it is easy to prove that 
\begin{equation}\label{eq:weak-to-strong}
\psig_{\la}^\al = \sum_{\be \preceq \al} R_{\al,\be}\ppi_{\la}^\be,
\end{equation}
where the sum is over the compositions $\be=(\be_1,\ldots,\be_\ell)$ of size $m=|\al|$ such that there exists $0=j_0<j_1<j_2<\cdots<j_k=\ell$ such that  $(\be_{j_{i-1}+1},\be_{j_{i-1}+1},\ldots,\be_{j_{i}})$ is a composition of $\al_i$ for all $i\in[k]$, and $R_{\al,\be}=\prod_{i=1}^kR_i$ where $R_i$ is the number of ways of partitioning a set of size $\al_i$ into blocks of respective sizes $\be_{j_{i-1}+1},\be_{j_{i-1}+1},\ldots,\be_{j_{i}}$.
Moreover, the matrix $(R_{\al,\be})_{\al,\be\models m}$ is invertible (since the matrix is upper triangular for the lexicographic ordering of compositions). Thus, from the separation probabilities $\{\psig_{\la}^\al\}_{\al \models m}$ one can deduce the strong separation probabilities $\{\ppi_{\la}^\al\}_{\al \models m}$ and in particular, for $m=n$, the connection coefficients $K_{\la,(n)}^\al$ of the symmetric group.\\

\noindent \textbf{Acknowledgment:} We thank Taedong Yun for several stimulating discussions.

\bibliography{biblio-separation}{}
\bibliographystyle{plain}

\noindent Olivier Bernardi\\
Department of Mathematics, Massachusetts Institute of Technology; Cambridge, MA USA 02139\\
{\tt bernardi@math.mit.edu}

\medskip

\noindent Rosena R. X. Du\\
Department of Mathematics, East China Normal University; Shanghai, China 200041\\
{\tt rxdu@math.ecnu.edu.cn}

\medskip

\noindent Alejandro H. Morales\\
Department of Mathematics, Massachusetts Institute of Technology; Cambridge, MA USA 02139\\
{\tt ahmorales@math.mit.edu}

\medskip

\noindent Richard P. Stanley\\
Department of Mathematics, Massachusetts Institute of Technology; Cambridge, MA USA 02139\\
{\tt rstan@math.mit.edu}

\end{document}